\title{Pentagonal Subdivision}
\author{Min Yan\thanks{Research was supported by Hong Kong RGC General Research Fund 16303515.} \\ Hong Kong University of Science and Technology}
\newcommand*\circled[1]{\tikz[baseline=(char.base)]{
             \node[shape=circle,draw,inner sep=0.5pt] (char) {#1};}}
\newtheorem{theorem}{Theorem}
\newtheorem*{theorem*}{Theorem}
\newtheorem{proposition}[theorem]{Proposition}
\theoremstyle{definition}
\newtheorem*{definition*}{Definition}
\newtheorem*{case*}{Case}
\newtheorem*{subcase*}{Subcase}
\theoremstyle{remark}
\numberwithin{equation}{section}
\begin{document}

\maketitle

\begin{abstract}
We develop a theory of simple pentagonal subdivision of quadrilateral tilings, on orientable as well as non-orientable surfaces. Then we apply the theory to answer questions related to pentagonal tilings of surfaces, especially those related to pentagonal or double pentagonal subdivisions. 
\end{abstract}

\section{Introduction}

Pentagonal subdivision appeared in \cite{wy1}, as a way of producing tilings of oriented surface by pentagons. When this process is applied to platonic solids, we get three families of edge-to-edge tilings of the sphere by congruent pentagons. We proved in \cite{wy1} that these are the only tilings of the sphere by congruent pentagons with edge length combination $a^2b^2c$. When some among $a,b,c$ become equal, the reduced form of the tilings also appeared in \cite{ay} and \cite{wy2}. A natural combinatorial question is the criterion for a pentagonal tiling to be a pentagonal subdivision. 

Pentagonal subdivision is also related to the problem of distribution of vertices of degree $>3$, which we call {\em high degree vertices}, reflecting negative curvature in a pentagonal tiling of the sphere. This problem is motivated by the fact that most vertices in a pentagonal tiling of the sphere have degree $3$. In \cite{yan}, we studied the extreme case of few high degree vertices, which means high concentration of high degree. Specifically, we proved that the tiling cannot have only one high degree vertex. Moreover, if the number is two, then the tiling is the explicit earth map tiling. At the other extreme, we are interested in the case that each tile has exactly one high degree vertex, which means evenly distributed high degree. Any pentagonal subdivision of a triangular tiling in which all vertices have degree $>3$ is such an example. We speculated that the converse is also true.

In \cite{wy1}, we also introduced double pentagonal subdivision, and proved that tilings of the sphere by congruent pentagons with edge length combination $a^3bc$ are two pentagonal subdivisions of platonic solids. A natural related question is a criterion for a tiling to admit double pentagonal subdivision. 

We answer these questions in the present paper, specifically through Theorems \ref{ps_thm1}, \ref{ps_thm2}, and \ref{dps_thm2}. In pursuing these, we discover a more fundamental construction, that is the simple pentagonal subdivision that turns some quadrilateral tilings into pentagonal tilings. In Section \ref{quad}, we develop this theory over any surface. We find criteria for a quadrilateral tiling to admit simple pentagonal subdivision. For example, Proposition \ref{quad_theroem3} says that on orientable surfaces, the condition is exactly the bipartite property. We further study degenerate quadrilaterals appearing in tilings admitting simple pentagonal subdivisions. Then we get very explicit criteria for some surfaces. For example, Theorem \ref{sphere} says that a quadrilateral tiling of the sphere can only have the non-degenerate tile or the specific degenerate tile $Q_{13}$. Moreover, any quadrilateral tiling of the sphere admits a simple pentagonal subdivision. 

In Section 3, we apply the theory in Section 2 to pentagonal tilings. In addition to answering our original questions, we also answer questions related to simple pentagonal subdivision. Theorems \ref{sp_thm1} and \ref{sp_thm2} give a criterion for a pentagonal tiling to come from simple pentagonal subdivision. As part of the discussion on double pentagonal subdivision, we also introduce quadrilateral subdivision in Section \ref{double}. Then in Theorem \ref{dps_thm1}, we give a criterion for a quadrilateral tiling to come from quadrilateral subdivision.

Finally, we fix the exact context of this paper. The surfaces are compact, connected and without boundary. By tiling, we mean a graph embedded in the surface, so that the surface is divided into open regions (i.e., tiles) homeomorphic to the open $2$-disk. This means that tilings are edge-to-edge, the degree of each vertex is at least $3$, and each tile has at least $3$ edges.

\section{Subdivisible Quadrilateral Tiling}
\label{quad}

Given a quadrilateral tiling, we would like to divide each tile into two halves, by connecting the middle points of an opposite pair of edges in each tile; we furthermore require that the middle point of each edge is used exactly once. Then each quadrilateral tile is divided into two pentagonal tiles, and we get a pentagonal tiling. See Figure \ref{4subdivision}, where the subdivision of quadrilaterals are indicated by dotted lines. We call the pentagonal tiling the {\em simple pentagonal subdivision} of the quadrilateral tiling. Not all  quadrilateral tilings can admit simple pentagonal subdivision.

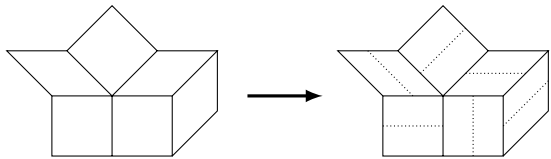
\begin{figure}[ht]
\centering
\begin{tikzpicture}[>=latex]

\foreach \a in {-1,0}
\draw[xshift=4.4*\a cm]
	(-0.8,-0.8) rectangle (0.8,0)
	(0,-0.8) -- (0,0)
	(-0.8,0) -- (-1.4,0.6) -- (-0.6,0.6) -- (0,0)
	(0.8,0) -- (1.4,0.6) -- (0.6,0.6) -- (0,0)
	(-0.6,0.6) -- (0,1.2) -- (0.6,0.6)
	(1.4,0.6) -- (1.4,-0.2) -- (0.8,-0.8);

\draw[very thick,->]
	(-2.6,0) -- ++(1,0);

\draw[densely dotted]
	(-0.8,-0.4) -- (0,-0.4)
	(0.4,-0.8) -- (0.4,0)
	(-0.4,0) -- (-1,0.6)
	(-0.3,0.3) -- (0.3,0.9)
	(0.3,0.3) -- (1.1,0.3)
	(0.8,-0.4) -- (1.4,0.2);

\end{tikzpicture}
\caption{Simple pentagonal subdivision.}
\label{4subdivision}
\end{figure}

\begin{definition*}
A quadrilateral tiling of a surface is {\em (pentagonally) subdivisible} if it admits a simple pentagonal subdivision.
\end{definition*}

We allow tiles to be degenerate. Here is the definition. 

\begin{definition*}
A polygon is {\em non-degenerate} if its boundary is a simple closed curve. Otherwise the polygon is {\em degenerate}.
\end{definition*}

Non-degenerate means no identification of vertices, and degenerate means that some vertices or even edges are identified. The identification of edges must be in pairs in order to get a surface. Figure \ref{degenerate} shows that the identified edge pair can be opposing or twisted, with the only exception that adjacent opposing edges cannot be identified because it produces a vertex of degree $1$.

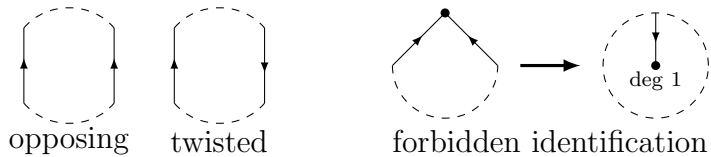
\begin{figure}[h]
\centering
\begin{tikzpicture}[>=latex,scale=1]


\foreach \a in {0,1}
{
\begin{scope}[xshift=2*\a cm]

\draw
	(-0.6,-0.5) -- (-0.6,0.5)
	(0.6,-0.5) -- (0.6,0.5);

\draw[dashed]
	(-0.6,0.5) to[out=50, in=130] (0.6,0.5)
	(-0.6,-0.5) to[out=-50, in=-130] (0.6,-0.5);

\draw[->]
	(-0.6,0) -- ++(0,0.1);

\end{scope}
}

\draw[->]
	(0.6,0) -- ++(0,0.1);
	
\node at (0,-1) {opposing};

\draw[->]
	(2.6,0) -- ++(0,-0.1);

\node at (2,-1) {twisted};


\begin{scope}[xshift=5cm]

\draw
	(-0.7,0) -- (0,0.7) -- (0.7,0);

\draw[dashed]
	(-0.7,0) to[out=-90, in=180] 
	(0,-0.7) to[out=0, in=-90] 
	(0.7,0);

\fill 
	(0,0.7) circle (0.06);
	
\draw[->]
	(-0.4,0.3) -- ++(0.1,0.1);
\draw[->]
	(0.4,0.3) -- ++(-0.1,0.1);

\draw[->, very thick]
	(1,0) -- ++(0.8,0);

\node at (1.4,-1) {forbidden identification};
		
\end{scope}

\begin{scope}[xshift=7.8cm]

\draw[dashed]
	(0,0) circle (0.7);

\draw
	(0,0.7) -- (0,0);

\fill 
	(0,0) circle (0.06);
	
\draw[->]
	(0,0.4) -- ++(0,-0.1);

\node at (0,-0.2) {\scriptsize deg 1};
		
\end{scope}

\end{tikzpicture}
\caption{Degeneracy by identifying edges.}
\label{degenerate}
\end{figure}

\subsection{Criterion for Subdivisibility}
\label{criterion}

Figure \ref{subdiv_example} gives a subdivisible tiling $T_4$ of a $4$-gon, a subdivisible tiling $T_2$ of a $2$-gon, and a subdivisible tiling of a torus $T^2$. By regarding the outside of $T_4$ as one quadrilateral, we see that the cube tiling of the sphere $S^2$ is subdivisible. If we identify antipidal points on the boundary of the $2$-gon, then we get a subdivisible tiling of projective space $P^2$, with two degenerate tiles.

\begin{figure}[h]
\centering
\begin{tikzpicture}[>=latex,scale=1]


\foreach \a in {0,...,3}
\draw[rotate=90*\a]
	(0.8,-0.8) -- (0.8,0.8) -- (0.3,0.3) -- (0.3,-0.3)
	;	

\draw[densely dotted]
	(-0.3,0) -- (0.3,0)
	(0,0.8) -- (0,0.3)
	(0,-0.8) -- (0,-0.3)
	(0.55,0.55) -- (0.55,-0.55)
	(-0.55,0.55) -- (-0.55,-0.55);

\node at (1.1,-0.8) {$T_4$};	


\begin{scope}[xshift=2.7cm]

\draw
	(0,0) circle (0.9)
	(0,-0.9) -- (0.4,-0.4) -- (0.4,0.4) -- (0,0.9)
	(0,-0.9) -- (-0.4,-0.4) -- (-0.4,0.4) -- (0,0.9)
	(0.4,-0.4) -- (0,-0.3) -- (-0.4,-0.4)
	(0.4,0.4) -- (0,0.3) -- (-0.4,0.4)
	(0,-0.3) -- (0,0.3);
	
\draw[densely dotted]
	(-0.9,0) -- (-0.4,0)
	(-0.2,-0.65) -- (0.2,-0.35)
	(-0.2,0.65) -- (0.2,0.35)
	(-0.2,-0.35) -- (-0.2,0.35)
	(0,0) -- (0.4,0)
	(0.2,-0.65) to[out=0,in=-90] 
	(0.7,0) to[out=90,in=0] 
	(0.2,0.65);

\node at (0.9,-0.8) {$T_2$};
		
\end{scope}


\begin{scope}[xshift=5.6cm]

\foreach \a in {0,...,3}
{
\draw[rotate=90*\a]
	(0,0) -- (0.8,0) -- (0.8,0.8) -- (0,0.8);
\draw[densely dotted,rotate=90*\a]
	(0.4,0) -- (0.4,0.8);
}

\foreach \a in {-1,1}
\foreach \b in {-1,1}
{
\draw[->] (0.4*\a,0.8*\b) -- ++(0.1,0);
\draw[->] (0.8*\a,0.4*\b) -- ++(0,0.1);	
}

\foreach \a in {-1,1}
{
\node at (0.4,\a) {\scriptsize $a_0$};
\node at (-0.4,\a) {\scriptsize $a_1$};
\node at (\a,0.4) {\scriptsize $b_0$};
\node at (\a,-0.4) {\scriptsize $b_1$};
}
	
\end{scope}

\end{tikzpicture}
\caption{Subdivisible tilings of $4$-gon, $2$-gon, and torus.}
\label{subdiv_example}
\end{figure}
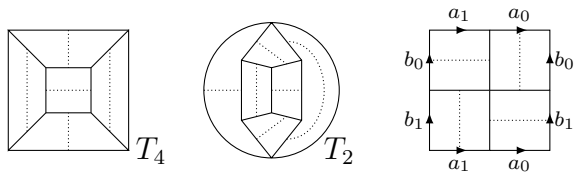

Figure \ref{operation} shows three operations on simple pentagonal subdivisions. The first is {\em dual subdivision}. This means that, for each tile, we switch the connection of middle points of one opposite pair of edges to the other opposite pair of edges. The second is {\em refinement}, given by applying a $3\times 3$ grid to each tile. We may also use $5\times 5$ grid, and so on. The refinement can be used to turn degenerate tiles into non-degenerate tiles. The third is {\em connected sum} of simple pentagonal subdivisions of two surfaces along non-degenerate tiles. Specifically, we delete one non-degenerate tile from each surface, and then glue along the boundaries of the deleted tiles in a twisted way. 

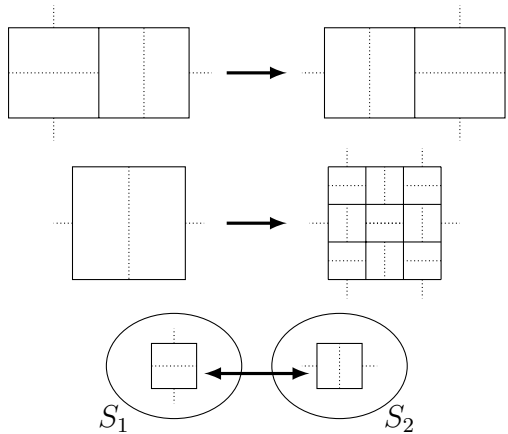
\begin{figure}[h]
\centering
\begin{tikzpicture}[>=latex,scale=1]

\foreach \a in {0,1}
\draw[xshift=4.2*\a cm]
	(-1.2,-0.6) rectangle (1.2,0.6)
	(0,-0.6) -- (0,0.6);

\draw[densely dotted]
	(-1.2,0) -- (0,0)
	(0.6,-0.6) -- (0.6,0.6)
	(-0.6,0.6) -- ++(0,0.3)	
	(-0.6,-0.6) -- ++(0,-0.3)
	(1.2,0) -- ++(0.3,0);
	
\draw[->, very thick]
	(1.7,0) -- ++(0.8,0);

\draw[xshift=4.2 cm, densely dotted]
	(1.2,0) -- (0,0)
	(-0.6,-0.6) -- (-0.6,0.6)
	(0.6,0.6) -- ++(0,0.3)	
	(0.6,-0.6) -- ++(0,-0.3)
	(-1.2,0) -- ++(-0.3,0);


\begin{scope}[shift={(0.4cm,-2cm)}]

\draw
	(-0.75,-0.75) rectangle (0.75,0.75);

\draw[densely dotted]
	(0,0.75) -- (0,-0.75)
	(0.75,0) -- (1,0)
	(-0.75,0) -- (-1,0);

\draw[very thick, ->]
	(1.3,0) -- ++(0.8,0);

\begin{scope}[xshift=3.4cm]

\foreach \a in {-3,-1,1,3}
\draw
	(-0.75,0.25*\a) -- (0.75,0.25*\a)
	(0.25*\a,-0.75) -- (0.25*\a,0.75);

\foreach \a in {1,-1}
\draw[densely dotted]
	(-0.75,0.5*\a) -- ++(0.5,0)
	(0.75,0.5*\a) -- ++(-0.5,0)
	(-0.25,0) -- (0.25,0)
	(0,0.25*\a) -- ++(0,0.5*\a)
	(0.5*\a,-0.25) -- ++(0,0.5)
	(0.75*\a,0) -- ++(0.25*\a,0)
	(0.5*\a,0.75) -- ++(0,0.25)
	(0.5*\a,-0.75) -- ++(0,-0.25);

\end{scope}

\end{scope}


\begin{scope}[shift={(2.1cm,-3.9cm)}]

\foreach \a in {-1,1}
{
\begin{scope}[xshift=1.1*\a cm]

\draw
	(0,0) ellipse (0.9 and 0.7);

\draw
	(-0.3,-0.3) rectangle (0.3,0.3);

\end{scope}
}

\draw[xshift=-1.1 cm, densely dotted]
	(-0.3,0) -- (0.3,0)
	(0,-0.3) -- ++(0,-0.2)
	(0,0.3) -- ++(0,0.2);

\draw[xshift=1.1 cm, densely dotted]
	(0,-0.3) -- (0,0.3)
	(-0.3,0) -- ++(-0.2,0)
	(0.3,0) -- ++(0.2,0);
	
\draw[very thick, <->]
	(-0.7,-0.1) -- (0.7,-0.1);
	
\node at (-1.9,-0.7) {$S_1$};
\node at (1.9,-0.7) {$S_2$};

\end{scope}

\end{tikzpicture}
\caption{Dual subdivision, refinement, and connected sum.}
\label{operation}
\end{figure}

\begin{proposition}\label{qonly}
Any surface has a subdivisible tiling in which all tiles are non-degenerate.
\end{proposition}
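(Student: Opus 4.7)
The plan is to invoke the classification of compact connected surfaces without boundary and construct the desired tilings building block by building block, using the three operations from Figure \ref{operation}. I will first establish base cases for $S^2$, $T^2$, and $P^2$, and then use connected sum along non-degenerate tiles to reach every surface of higher genus, orientable or not.

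Two of the three base cases come essentially for free from the excerpt. The cube tiling of $S^2$, obtained by regarding the outside of $T_4$ as a sixth quadrilateral, has six non-degenerate tiles and is subdivisible, and the torus tiling in Figure \ref{subdiv_example} is already non-degenerate. The delicate base case is $P^2$. The tiling produced by identifying antipodal points on the boundary of the $2$-gon in $T_2$ is subdivisible but has two degenerate tiles, so I will apply the refinement operation (a $3\times 3$ grid in each tile) to repair them. Refinement preserves subdivisibility, and because every boundary identification of an original tile becomes an identification between edges lying on two different small sub-tiles, each of the nine resulting sub-tiles is non-degenerate.

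For the inductive step I iterate the connected sum. Any orientable surface of genus $g\ge 2$ is a connected sum of $g$ tori, and any non-orientable surface is a connected sum of copies of $P^2$. The connected sum of Figure \ref{operation} deletes one non-degenerate tile from each summand and glues along the resulting boundary; the tiles that are not deleted retain their interior structure, so they stay non-degenerate, and subdivisibility is preserved by the way the operation is defined.

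The main obstacle I foresee is the careful verification of the $P^2$ base case, specifically that refinement turns every degenerate tile into only non-degenerate sub-tiles. Since the degeneracy of a quadrilateral tile amounts to pairwise identification of boundary vertices or edges (as catalogued in Figure \ref{degenerate}), after refinement each identified pair of boundary pieces is carried by distinct small sub-tiles, and no sub-tile has internal identifications; a short case check against the list of admissible degenerate boundary patterns will make this precise. Beyond that verification, the rest of the argument is a routine assembly via connected sum.
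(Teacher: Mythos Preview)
Your proposal is correct and follows essentially the same approach as the paper: establish the base cases $S^2$, $T^2$, $P^2$ (the last via refinement of the $T_2$-based tiling), then reach all other surfaces by iterated connected sum. The paper's own proof is terser but identical in structure.
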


\begin{proof}
Figure \ref{subdiv_example} gives subdivisible tilings of $S^2,P^2,T^2$. The tiles for $S^2,T^2$ are non-degenerate. We may apply refinement to get a subdivisible tiling of $P^2$ consisting of non-degenerate tiles only. Then we may use connected sum to construct subdivisible tilings of all other surfaces with only non-degenerate tiles.
\end{proof}

A simple pentagonal subdivision is equivalent to assigning an orthogonal direction at the middle of each edge, such that two edges adjacent at a vertex have compatible assignments. See the first of Figure \ref{vertex_orient}. We call the assignment {\em subdivision of edge}. All the compatible subdivisions of all edges at a vertex give an orientation of the vertex. The dual subdivision simply reverses subdivisions of all edges (changing the orthogonal directions to the opposites). This reverses all the orientations of vertices.

\begin{figure}[h]
\centering
\begin{tikzpicture}[>=latex,scale=1]


\begin{scope}[shift={(-5cm,-0.5cm)}]

\draw
	(40:1.6) -- (0,0) -- (1.6,0);

\draw[densely dotted]
	(40:0.8) -- ++(130:0.3)
	(0.8,0) -- ++(0,0.3);

\draw[->]
	(-40:0.4) arc (-40:180:0.4);
	
\fill
	(0,0) circle (0.06);

\node at (0:1.8) {\small 1};
\node at (40:1.8) {\small 2};

\end{scope}


\foreach \a in {1,...,7}
{
\draw[xshift=-1 cm]
	(0,0) -- (45*\a:1);
\draw[xshift=-1 cm, densely dotted]
	(45*\a:0.7) -- ++(45*\a+90:0.3);

\draw[xshift=1 cm]
	(0,0) -- (180+45*\a:1);
\draw[xshift=1 cm, densely dotted]
	(180+45*\a:0.7) -- ++(45*\a+90:0.3);
}

\draw
	(-1,0) -- (1,0);
\draw[densely dotted]
	(0,0) -- ++(0,0.3);

\fill
	(-1,0) circle (0.06);
\fill
	(1,0) circle (0.06);

\draw[->]
	(-1,-0.4) arc (-90:210:0.4);
\draw[->]
	(1,-0.4) arc (270:-30:0.4);

\end{tikzpicture}
\caption{Subdivision of edge, and orientation at vertex.}
\label{vertex_orient}
\end{figure}
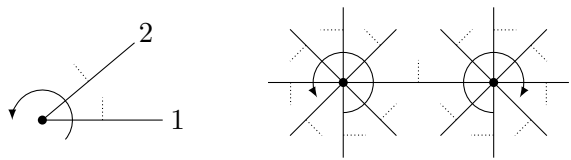

The second of Figure \ref{vertex_orient} shows that two vertices at the end of any edge should have opposite orientations.  This characterises subdivisible tilings.

\begin{proposition}\label{quad_theroem1}
A quadrilateral tiling of a surface is subdivisible if and only if it is possible to assign orientations at all vertices, such that the orientations at the two ends of each edge are opposite to each other. 
\end{proposition}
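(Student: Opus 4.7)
The plan is to build on the equivalence, already laid out in the paragraph before the proposition, between a simple pentagonal subdivision and a compatible family of subdivisions of edge. In that framework, a subdivision of an edge is a perpendicular direction at its midpoint (pointing into the tile that uses that midpoint); compatibility at a vertex says that the perpendiculars at all incident edges are obtained by a uniform $90^{\circ}$ rotation from the edge directions; and the choice of clockwise versus counterclockwise rotation is the orientation at the vertex. The proposition then amounts to converting between globally compatible systems of edge subdivisions and assignments of vertex orientations satisfying the opposite-endpoint condition.

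The forward direction is essentially the content of the second diagram of Figure~\ref{vertex_orient}. Given a subdivision, each vertex inherits a local orientation. For any edge $uv$, the perpendicular at its midpoint is a single direction, but it must arise by $90^{\circ}$ rotation of $u\to v$ in the rotational sense at $u$ and simultaneously by $90^{\circ}$ rotation of $v\to u$ in the sense at $v$. Since $u\to v$ and $v\to u$ are opposite, the two rotational senses must be opposite as well.

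For the converse, I would first convert the given orientations into subdivisions of every edge. For an edge $uv$, rotate $u\to v$ by $90^{\circ}$ in the sense prescribed at $u$ to obtain a perpendicular at the midpoint; the opposite-orientation hypothesis at $v$ guarantees that the same perpendicular is produced from $v$'s side. So the edge subdivision is well-defined, and by construction it is automatically compatible at every vertex. The main remaining step is to verify that this edge-subdivision is actually induced by a simple pentagonal subdivision, i.e.\ that in every quadrilateral tile the four midpoint perpendiculars split into one opposite pair pointing inward and the other opposite pair pointing outward.

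This is the only step that requires genuine care, and I would carry it out by fixing a tile $T$ with vertices $v_1,v_2,v_3,v_4$ in counterclockwise order and edges $e_i=v_iv_{i+1}$. A direct inspection of the local picture at a vertex yields the rule: if the orientation at $v_i$ is counterclockwise then the CCW-next edge of $T$ at $v_i$ (namely $e_i$) has perpendicular pointing into $T$ while the CCW-previous edge ($e_{i-1}$) has it pointing out; the clockwise orientation reverses both verdicts. Since consecutive vertices of $T$ are joined by an edge, the hypothesis forces the orientations around $T$ to alternate CCW, CW, CCW, CW. Combining this alternation with the local rule shows that $\{e_1,e_3\}$ is the inward opposite pair and $\{e_2,e_4\}$ is the outward opposite pair, producing the desired simple pentagonal subdivision of $T$. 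Everything else is a direct unpacking of the equivalence stated in the paper, so this local computation in a single quadrilateral is the only obstacle.
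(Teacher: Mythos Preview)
Your approach is the same as the paper's: treat the forward direction as already established by the discussion around Figure~\ref{vertex_orient}, and for the converse recover the edge subdivision from the vertex orientations and then check that the four edge subdivisions around any quadrilateral assemble into a cut across one opposite pair. The paper's proof is exactly this, only more tersely stated.

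There is one point you should tighten. Your local computation fixes a tile $T$ with vertices $v_1,v_2,v_3,v_4$ ``in counterclockwise order'' and then argues that the hypothesis forces the vertex orientations to alternate CCW, CW, CCW, CW. As written, this presumes that $v_1,\dots,v_4$ are four distinct vertices carrying four independent orientation labels, i.e.\ that $T$ is non-degenerate. The paper explicitly allows degenerate tiles (indeed much of Section~\ref{quad} is devoted to them), and in something like $Q_{12}$ your alternation would appear to assign opposite labels to a single identified vertex. The paper handles this with the remark that ``the process only makes use of the orientation inside the tile'': one should read the orientation at $v_i$ as the restriction of the vertex orientation to the $i$th \emph{corner} of $T$, compared against the chosen orientation of the disk $T$. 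The opposite-endpoint condition along each boundary edge then forces the \emph{corner} orientations to alternate around $\partial T$, regardless of whether some $v_i$ coincide as points of the surface (see the second picture in Figure~\ref{orient2subdiv}). With that reinterpretation your argument goes through verbatim; without it, the degenerate case is not covered.
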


\begin{proof}
We only need to argue that an orientation assignment satisfying the proposition induces simple pentagonal subdivision. First, the opposite orientations at the two ends of an edge determine the subdivision of the edge. Then in a quadrilateral in the first of Figure \ref{orient2subdiv}, the opposite orientation property implies that subdivisions of the four edges induce a subdivision of the quadrilateral. We note that the process only makes use of the orientation inside the tile, and therefore includes the possibility that the quadrilateral tile is degenerate, such as the second of Figure \ref{orient2subdiv}, in which a pair of vertices of a quadrilateral tile are identified.
\end{proof}

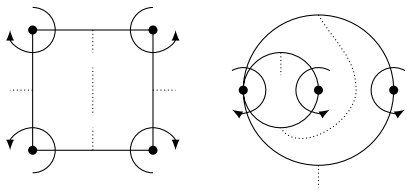
\begin{figure}[h]
\centering
\begin{tikzpicture}[>=latex,scale=1]


\draw
	(-0.8,-0.8) rectangle (0.8,0.8);

\draw[densely dotted]
	(0,-0.8) -- ++(0,0.3)
	(0,0.8) -- ++(0,-0.3)
	(0.8,0) -- ++(0.3,0)
	(-0.8,0) -- ++(-0.3,0)
	(0,-0.3) -- (0,0.3);	

\foreach \a in {0,...,3}
\fill[rotate=90*\a]
	(0.8,0.8) circle (0.06);

\foreach \a in {1,-1}
{
\draw[scale=\a,->]
	(0.8,1.1) arc (90:360:0.3);
\draw[scale=\a,->]
	(-0.8,1.1) arc (90:-180:0.3);
}


\begin{scope}[xshift=3cm]

\draw
	(-0.5,0) circle (0.5)
	(0,0) circle (1);

\draw[densely dotted]
	(0,1) to[out=-60,in=90] 
	(0.5,0) to[out=-90,in=-60]  
	(-0.5,-0.5)
	(-0.5,0.5) -- ++(0,-0.3)
	(0,-1) -- ++(0,-0.3);	
	
\fill
	(-1,0) circle (0.06)
	(0,0) circle (0.06)
	(1,0) circle (0.06);

\draw[->,xshift=-1 cm]
	(120:0.3) arc (120:-120:0.3);
	
\foreach \a in {0,1}
\draw[->,xshift=\a cm]
	(60:0.3) arc (60:300:0.3);
	
\end{scope}

\end{tikzpicture}
\caption{Vertex orientations determine subdivision.}
\label{orient2subdiv}
\end{figure}

For a quadrilateral tiling of a surface $S$, we may start with an orientation $o_v$ at a vertex $v$. We connect to any other vertex $w$ by a path $v=v_0-v_1-\cdots-v_k=w$ (each ``$-$'' is one edge), and move $o_v$ to an orientation $o_w$ along the path. Then we assign $o_w$ to $w$ for even $k$, and assign $-o_w$ for odd $k$. The problem is what happens when $w=v$, i.e., the path is a cycle. This requires the assignment to be compatible with the orientation homomorphism $\pi_1S\to \{\pm 1\}$. If $S=kT^2$ (connected sum of $k$ copies of tori) is an orientable surface, then the fundamental group $\pi_1S$ is generated by {\em fundamental cycles} $a_1,b_1,\dots,a_k,b_k$, subject to the relation $a_1b_1a_1^{-1}b_1^{-1}\cdots a_kb_ka_k^{-1}b_k^{-1}=1$. If $S=kP^2$ (connected sum of $k$ copies of projective space) is a non-orientable surface, then $\pi_1S$ is generated by {\em fundamental cycles} $a_1,\dots,a_k$, subject to the relation $a_1^2\cdots a_k^2=1$. In case $S$ is the sphere, the fundamental group is trivial, and we do not need to be concerned with fundamental cycles.

\begin{proposition}\label{quad_theroem2}
A quadrilateral tiling of a surface is subdivisible if and only if the fundamental cycles of the surface can be represented by cycles in the tiling in the following way:
\begin{enumerate}
\item For an orientable surface, the fundamental cycles are represented by cycles with an even number of edges. 
\item For a non-orientable surface, the fundamental cycles are represented by cycles with an odd number of edges. 
\end{enumerate}
\end{proposition}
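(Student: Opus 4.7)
The plan is to apply Proposition \ref{quad_theroem1} and reduce subdivisibility to a parity condition on $\pi_1(S)$. Attempt to build a vertex orientation assignment by parallel transport: fix a base vertex $v_0$ with orientation $o_{v_0}$; for each other vertex $v$, choose an edge path $\gamma$ from $v_0$ to $v$ of length $k$, parallel transport $o_{v_0}$ along $\gamma$ to a local orientation $o'$ at $v$, and set $o_v = (-1)^k o'$. By construction, consecutive vertices along $\gamma$ receive opposite orientations, so the opposite-ends condition of Proposition \ref{quad_theroem1} is automatic, provided the assignment $o_v$ is independent of the choice of $\gamma$.

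Path-independence is exactly the condition that for every edge loop $\delta$ based at $v_0$ one has $(-1)^{\ell(\delta)} w([\delta]) = 1$, where $\ell(\delta)$ is the length of $\delta$ modulo $2$ and $w : \pi_1(S) \to \{\pm 1\}$ is the orientation homomorphism (transport around $\delta$ multiplies a local orientation by $w([\delta])$). Thus subdivisibility is equivalent to the identity $(-1)^{\ell} = w$ holding on every edge loop.

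The central step is to observe that $\ell$ descends to a well-defined homomorphism $\pi_1(S) \to \mathbb{Z}/2$. This is where the quadrilateral hypothesis enters: an elementary homotopy across a single tile replaces $j$ boundary edges by the complementary $4-j$ edges, changing the total length by $4-2j$, which is even. Hence $\ell$ depends only on the homotopy class, and is a homomorphism via concatenation. Since both $(-1)^{\ell}$ and $w$ are then homomorphisms $\pi_1(S) \to \{\pm 1\}$, their equality on all of $\pi_1(S)$ reduces to equality on a generating set, i.e., on the fundamental cycles $a_1, b_1, \ldots, a_g, b_g$ or $a_1, \ldots, a_k$.

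In the orientable case $w \equiv 1$, and the condition becomes $\ell(a_i) = \ell(b_i) = 0$, i.e., each fundamental cycle admits an edge representative of even length. In the non-orientable case each $a_i$ is orientation-reversing, so $w(a_i) = -1$, and the condition becomes $\ell(a_i) = 1$, i.e., odd-length representatives. Since $\ell$ is well-defined on homotopy classes, having \emph{some} edge representative of the prescribed parity is the same as having \emph{every} such representative of that parity, which completes both implications. I expect the one genuinely delicate point to be the well-definedness of $\ell$ on $\pi_1(S)$; once that is in hand, everything else is bookkeeping with the two homomorphisms $(-1)^{\ell}$ and $w$.
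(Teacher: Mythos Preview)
Your proposal is correct and follows essentially the same approach as the paper: both build the vertex orientation by parallel transport from a base vertex with a sign $(-1)^k$, reduce well-definedness to the condition $(-1)^{\ell(\delta)}w([\delta])=1$ on loops, use the key observation that an elementary homotopy across a quadrilateral replaces $j$ edges by $4-j$ and hence preserves parity, and then check the condition only on the standard generators of $\pi_1(S)$. Your write-up is slightly more explicit in packaging $\ell$ and $w$ as homomorphisms to $\{\pm 1\}$, but the argument is the same.
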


\begin{proof}
For the orientation assignment to be compatible with the orientation homomorphism $\pi_1S\to \{\pm 1\}$, we must have the compatibility for generators. For an orientable surface, the orientation homomorphism takes all fundamental cycles $a_i,b_i$ to $1$. This means even number of edges in the first part. For a non-orientable surface, the orientation homomorphism takes all fundamental cycles $a_i$ to $-1$. This means odd number of edges in the second part. 

Conversely, suppose we have the compatibility for fundamental cycles. Then we have the compatibility for combinations of fundamental cycles. Now any cycle is homotopic to a combination of fundamental cycles. The homotopy can be reduced to the replacements of some consecutive edges of a tile by the remaining edges of the same tile. For a quadrilateral tile, this means $i$ edges replaced by $4-i$ edges. Such replacement preserves the parity of the number of edges in a cycle. Therefore we have the compatibility for all cycles.
\end{proof}

The interpretation of subdivisibility in terms of the parities of the numbers of edges in cycles implies the following.

\begin{proposition}\label{quad_theroem3}
A quadrilateral tiling on an orientable surface is subdivisible if and only if the underlying graph is bipartite. The underlying graph of a subdivisible quadrilateral tiling on a non-orientable surface cannot be bipartite. 
\end{proposition}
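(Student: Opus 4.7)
The plan is to deduce both statements directly from the parity characterisations in Propositions \ref{quad_theroem1} and \ref{quad_theroem2}. For the orientable case, the forward direction (subdivisible implies bipartite) is cleanest via Proposition \ref{quad_theroem1}: I would fix an orientation assignment $\{o_v\}$ at vertices, with $o_u = -o_v$ across every edge. Walking along any closed path $v_0 v_1 \cdots v_k = v_0$ in the underlying graph flips the orientation at each step, so $o_{v_k} = (-1)^k o_{v_0}$. Since $v_k = v_0$, we must have $(-1)^k = 1$, forcing $k$ even. Hence every cycle has even length, which is exactly the bipartite condition.

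For the reverse direction on an orientable surface, the argument is a one-line appeal to Proposition \ref{quad_theroem2}(1): if the underlying graph is bipartite, then every cycle has even length, so in particular any cycles in the tiling representing the fundamental cycles $a_i, b_i$ are automatically even, and Proposition \ref{quad_theroem2}(1) yields subdivisibility.

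For the non-orientable case, I would apply Proposition \ref{quad_theroem2}(2). If the tiling is subdivisible, then the fundamental cycle $a_1$ is represented by a cycle in the tiling having an odd number of edges. This is an odd cycle in the underlying graph, and the presence of an odd cycle rules out bipartiteness.

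The step that deserves the most care is the parity argument in the orientable forward direction. Its validity depends on the orientation assignment from Proposition \ref{quad_theroem1} being a \emph{global} object defined at every vertex and flipping across every edge, so that the computation $o_{v_k} = (-1)^k o_{v_0}$ is unambiguous along any closed walk regardless of how it winds through the surface. Once that is in hand, everything else is a routine invocation of Proposition \ref{quad_theroem2}, so I anticipate no serious obstacle.
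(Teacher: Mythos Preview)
Your argument is correct and matches the paper's intent: the proposition is stated there without a written proof, only the remark that it follows from ``the interpretation of subdivisibility in terms of the parities of the numbers of edges in cycles,'' i.e., from Proposition~\ref{quad_theroem2}. Your use of Proposition~\ref{quad_theroem1} for the orientable forward direction (reading the vertex orientations against a fixed global orientation to obtain a $2$-colouring) is a harmless variant of the same idea and is exactly the mechanism underlying the parity discussion preceding Proposition~\ref{quad_theroem2}.
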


\subsection{Degeneracy of Tiles}

The criteria in Section \ref{criterion} are qualitative. We may get more specific criteria by studying how degenerate a tile can be in a subdivisible quadrilateral tiling. 

Figure \ref{quad_degenerate} shows all quadrilateral tiles. Some quadrilaterals are not labelled because we will show that they cannot be tiles in subdivisible tilings. The first row consists of the non-degenerate quadrilateral $Q$, and degenerate quadrilateral tiles obtained by identifying vertices. We label the four vertices of $Q$ by $1,2,3,4$, and the vertices are not identified. We use the same labels for the vertices of all quadrilaterals. For example, $Q_{12}$ means that the vertices $1,2$ are identified, and $Q_{12,34}$ means that $1,2$ are identified and $3,4$ are also identified. 

We need to pay attention to how the tiles embed into surfaces. This means taking the union of a quadrilateral tile with a disk neighborhood of the identified vertex. Different configurations of the disk neighborhood may yield different embeddings. This happens when $1,2,3$ are identified, and there are two possible embeddings $Q_{123},Q_{132}$. A similar phenomenon occurs when $1,2,3,4$ are identified. 

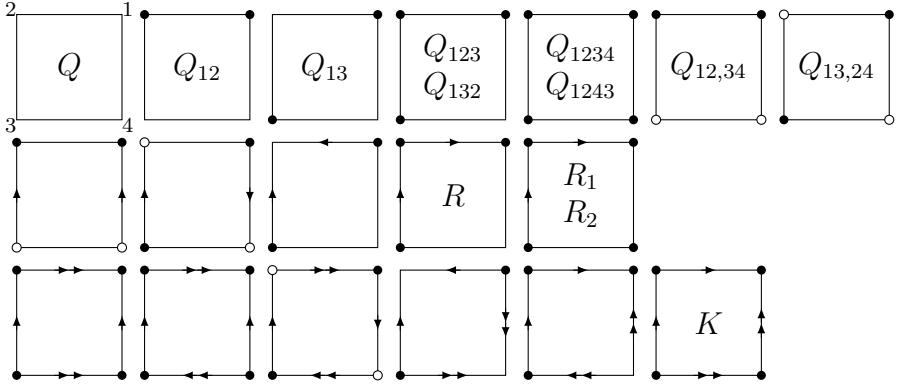
\begin{figure}[ht]
\centering
\begin{tikzpicture}[>=latex,scale=1]


\foreach \a in {0,...,6}
\draw[xshift=1.7*\a cm]
	(-0.7,-0.7) rectangle (0.7,0.7);

\foreach \a in {1,...,4}
\node at (-45+90*\a:1.1) {\scriptsize \a};

\node at (0,0) {$Q$};
\node at (1.7,0) {$Q_{12}$};
\node at (3.4,0) {$Q_{13}$};
\node at (5.1,0.25) {$Q_{123}$};
\node at (5.1,-0.25) {$Q_{132}$};
\node at (6.8,0.25) {$Q_{1234}$};
\node at (6.8,-0.25) {$Q_{1243}$};
\node at (8.5,0) {$Q_{12,34}$};
\node at (10.2,0) {$Q_{13,24}$};

\foreach \a in {1,2,3,4,5,6}
\fill[xshift=1.7*\a cm] (0.7,0.7) circle (0.06);
\foreach \a in {1,3,4,5}
\fill[xshift=1.7*\a cm] (-0.7,0.7) circle (0.06);
\foreach \a in {2,3,4,6}
\fill[xshift=1.7*\a cm] (-0.7,-0.7) circle (0.06);
\foreach \a in {4}
\fill[xshift=1.7*\a cm] (0.7,-0.7) circle (0.06);
\filldraw[fill=white, xshift=1.7*5 cm] 
	(-0.7,-0.7) circle (0.06)
	(0.7,-0.7) circle (0.06);
\filldraw[fill=white, xshift=1.7*6 cm] 
	(-0.7,0.7) circle (0.06)
	(0.7,-0.7) circle (0.06);


\begin{scope}[yshift=-1.7cm]

\foreach \a in {0,...,4}
\draw[xshift=1.7*\a cm]
	(-0.7,-0.7) rectangle (0.7,0.7);

\node at (1.7*3,0) {$R$};
\node at (1.7*4,0.25) {$R_1$};
\node at (1.7*4,-0.25) {$R_2$};

\foreach \a in {3,4}
\draw[->, xshift=1.7*\a cm] (-0.1,0.7) -- ++(0.2,0);

\draw[<-] (-0.1+1.7*2,0.7) -- ++(0.2,0);

\foreach \a in {0,...,4}
\draw[->, xshift=1.7*\a cm] (-0.7,-0.1) -- ++(0,0.2);

\draw[->] (0.7,-0.1) -- ++(0,0.2);

\draw[->] (0.7+1.7*1,0.1) -- ++(0,-0.2);

\foreach \a in {0,1,2,3,4}
\fill[xshift=1.7*\a cm] (0.7,0.7) circle (0.06);
\foreach \a in {0,3,4}
\fill[xshift=1.7*\a cm] (-0.7,0.7) circle (0.06);
\foreach \a in {1,2,3,4}
\fill[xshift=1.7*\a cm] (-0.7,-0.7) circle (0.06);
\foreach \a in {4}
\fill[xshift=1.7*\a cm] (0.7,-0.7) circle (0.06);
\foreach \a in {1}
\filldraw[xshift=1.7*\a cm,fill=white] (-0.7,0.7) circle (0.06);
\foreach \a in {0}
\filldraw[xshift=1.7*\a cm,fill=white] (-0.7,-0.7) circle (0.06);
\foreach \a in {0,1}
\filldraw[xshift=1.7*\a cm,fill=white] (0.7,-0.7) circle (0.06);

\end{scope}


\begin{scope}[yshift=-3.4cm]

\foreach \a in {0,...,5}
\draw[xshift=1.7*\a cm]
	(-0.7,-0.7) rectangle (0.7,0.7);

\node at (1.7*5,0) {$K$};

\foreach \a in {0,1,2}
{
\draw[->, xshift=1.7*\a cm] (0,0.7) -- ++(0.2,0);
\draw[->, xshift=1.7*\a cm] (-0.2,0.7) -- ++(0.2,0);
}

\draw[->] (0.1+1.7*3,0.7) -- ++(-0.2,0);

\foreach \a in {4,5}
\draw[->] (-0.1+1.7*\a,0.7) -- ++(0.2,0);

\foreach \a in {0,3,5}
{
\draw[->, xshift=1.7*\a cm] (0,-0.7) -- ++(0.2,0);
\draw[->, xshift=1.7*\a cm] (-0.2,-0.7) -- ++(0.2,0);
}

\foreach \a in {1,2,4}
{
\draw[->, xshift=1.7*\a cm] (0,-0.7) -- ++(-0.2,0);
\draw[->, xshift=1.7*\a cm] (0.2,-0.7) -- ++(-0.2,0);
}

\foreach \a in {0,...,5}
\draw[->, xshift=1.7*\a cm] (-0.7,-0.1) -- ++(0,0.2);

\foreach \a in {0,1}
\draw[->, xshift=1.7*\a cm] (0.7,-0.1) -- ++(0,0.2);

\draw[->, xshift=1.7*2 cm] (0.7,0.1) -- ++(0,-0.2);

\foreach \a in {4,5}
{
\draw[->, xshift=1.7*\a cm] (0.7,0) -- ++(0,0.2);
\draw[->, xshift=1.7*\a cm] (0.7,-0.2) -- ++(0,0.2);
}

\draw[->, xshift=1.7*3 cm] (0.7,0) -- ++(0,-0.2);
\draw[->, xshift=1.7*3 cm] (0.7,0.2) -- ++(0,-0.2);

\foreach \a in {0,1,2,3,4,5}
\fill[xshift=1.7*\a cm] (0.7,0.7) circle (0.06);
\foreach \a in {0,1,4,5}
\fill[xshift=1.7*\a cm] (-0.7,0.7) circle (0.06);
\foreach \a in {0,1,2,3,4,5}
\fill[xshift=1.7*\a cm] (-0.7,-0.7) circle (0.06);
\foreach \a in {0,1,5}
\fill[xshift=1.7*\a cm] (0.7,-0.7) circle (0.06);
\foreach \a in {2}
\filldraw[xshift=1.7*\a cm,fill=white] (-0.7,0.7) circle (0.06);
\foreach \a in {2}
\filldraw[xshift=1.7*\a cm,fill=white] (0.7,-0.7) circle (0.06);

\end{scope}

\end{tikzpicture}
\caption{Quadrilateral tiles.}
\label{quad_degenerate}
\end{figure}

The second row consists of degenerate quadrilateral tiles obtained by identifying one pair of edges. The identification of edges also induces identification of vertices. The first of the second row is a cylinder. In the first of Figure \ref{quad_degenerate1}, we indicate the opposite orientations at vertices 1 and 2 implied by Proposition \ref{quad_theroem1}. In the second picture, we glue the two edges to get a cylinder, and find that the two orientations are still opposite at the identified vertex. Therefore the quadrilateral cannot be a tile in subdivisible tiling, and cannot be used. The second of the second row is a M\"obius band, and can be dismissed for the same reason. The third quadrilateral is dismissed because adjacent opposing pair of edges are identified, and such identification is forbidden by Figure \ref{degenerate}.

\begin{figure}[ht]
\centering
\begin{tikzpicture}[>=latex,scale=1]


\draw
	(-0.7,-0.7) rectangle (0.7,0.7);

\fill
	(-0.7,0.7) circle (0.06)
	(0.7,0.7) circle (0.06);
	
\draw[->]
	(-0.7,-0.1) -- ++(0,0.2);
\draw[->]
	(0.7,-0.1) -- ++(0,0.2);	

\draw[shift={((0.7 cm,0.7 cm))},->]
	(-100:0.4) arc (-100:-170:0.4);
\draw[shift={((-0.7 cm,0.7 cm))},->]
	(-80:0.4) arc (-80:-10:0.4);

\node at (0.6,0.85) {\scriptsize 1};
\node at (-0.6,0.85) {\scriptsize 2};

\begin{scope}[xshift=2.4cm]

\draw
	(0.7,0.6) arc (0:360:0.7 and 0.2)
	(-0.7,-0.5) arc (180:360:0.7 and 0.2)
	(-0.7,-0.5) -- (-0.7,0.6)
	(0.7,-0.5) -- (0.7,0.6)
	(0,-0.7) -- (0,0.4);

\draw[dashed]
	(0.7,-0.5) arc (0:180:0.7 and 0.2);

\fill
	(0,0.4) circle (0.06);
	
\draw[->]
	(0,-0.2) -- ++(0,0.2);

\node at (-0.1,0.55) {\scriptsize 1};
\node at (0.1,0.55) {\scriptsize 2};

\draw[yshift=0.5 cm,->]
	(-100:0.4) arc (-100:-170:0.4);
\draw[yshift=0.5 cm,->]
	(-80:0.4) arc (-80:-10:0.4);
		
\end{scope}

\end{tikzpicture}
\caption{Cylinder is not suitable for subdivisible tiling.}
\label{quad_degenerate1}
\end{figure}
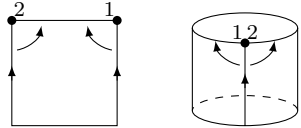

Figure \ref{qr} describes the fourth quadrilateral $R$ in the second row. In the first picture, we divide $R$ (by dashed line) into two triangles. In the second picture, we glue the two triangles in different order (single arrow edges are glued first). The result is the M\"obius band in the third picture. The fourth picture is the boundary of tile, which is a $2$-gon; we also indicate the subdivision on boundary.

\begin{figure}[h]
\centering
\begin{tikzpicture}[>=latex,scale=1]


\draw
	(-0.7,-0.7) rectangle (0.7,0.7);

\draw[dashed]
	(-0.7,0.7) -- (0.7,-0.7);
	
\draw[densely dotted]
	(-0.7,0) -- (0.7,0)
	(0,0.7) -- ++(0,0.3)
	(0,-0.7) -- ++(0,-0.3);

\foreach \a in {0,1,2}
\fill[rotate=90*\a]
	(0.7,0.7) circle (0.06);
	
\foreach \a in {1,...,4}
\node at (-45+90*\a:1.15) {\scriptsize \a};

\draw[->] (-0.1,0.7) -- ++(0.2,0);
\draw[->] (-0.7,-0.1) -- ++(0,0.2);
\draw[->] (0,0) -- ++(0.1,-0.1);
\draw[->] (-0.1,0.1) -- ++(0.1,-0.1);

\node[draw, shape=circle, inner sep=0.5] at (-0.3,-0.3) {\small $1$};
\node[draw, shape=circle, inner sep=0.5] at (0.3,0.3) {\small $2$};


\begin{scope}[xshift=3cm]

\draw
	(-1.4,0.7) -- (0,0.7) -- (0,-0.7) -- (1.4,-0.7);
	
\draw[dashed]
	(-1.4,0.7) -- (0,-0.7)
	(1.4,-0.7) -- (0,0.7);	

\draw[densely dotted]
	(0,0) -- (0.7,0)
	(-0.7,0) -- (-0.7,0.7)
	(0.7,-0.7) -- ++(0,-0.3);
	
\fill
	(0,0.7) circle (0.06)
	(0,-0.7) circle (0.06);
		
\draw[->] (0,-0.1) -- ++(0,0.2);
\draw[->] (0.7,0) -- ++(0.1,-0.1);
\draw[->] (0.6,0.1) -- ++(0.1,-0.1);
\draw[<-] (-0.7,0) -- ++(0.1,-0.1);
\draw[<-] (-0.8,0.1) -- ++(0.1,-0.1);

\node[draw, shape=circle, inner sep=0.5] at (0.4,-0.3) {\small $1$};
\node[rotate=-90, draw, shape=circle, inner sep=0.5] at (-0.4,0.3) {\small $2$};

\node at (0.25,0.8) {\scriptsize 1,2};
\node at (-0.25,-0.8) {\scriptsize 2,3};
\node at (1.5,-0.74) {\scriptsize 4};
\node at (-1.5,0.74) {\scriptsize 4};

\end{scope}


\begin{scope}[xshift=6.6cm]

\draw
	(-1.2,0.6) to[out=30,in=90] 
	(1.4,0) to[out=-90,in=-30] 
	(-1.2,-0.6)
	(-1.2,0.6) to[out=-50,in=-90] 
	(0.4,0) to[out=90,in=30] 
	(-0.5,0)
	(-1.2,0.6) to[out=-80,in=180] 
	(0.0,-0.6) to[out=-0,in=-90] 
	(0.9,0) to[out=90,in=-20] 
	(-1.2,0.6);	

\draw[dashed]
	(-0.5,0) to[out=210,in=50] (-1.2,-0.6)
	(-1.2,-0.6) -- (-1.2,0.6);	
	
\draw[densely dotted]
	(0.9,0) to[out=140,in=10] 
	(-0.4,0.3) to[out=190,in=30] 
	(-1.2,0) to[out=-70,in=180] 
	(0,-0.8) to[out=0,in=220] 
	(1.4,0)
	(0.4,0) -- ++(-0.3,0);	
			
\fill
	(-1.2,0.6) circle (0.06);

\draw[->]
	(0.9,-0.1) -- ++(0,0.1);
\draw[<-] 
	(-1.2,-0.2) -- ++(0,0.2);
\draw[<-] 
	(-1.2,0) -- ++(0,0.2);

\node at (-1.6,0.7) {\scriptsize 1,2,3};
\node at (-1.3,-0.7) {\scriptsize 4};

\end{scope}


\begin{scope}[xshift=9 cm]

\draw
	(0,-0.7) to[out=30,in=-30] (0,0.7)
	(0,-0.7) to[out=150,in=-150] (0,0.7);

\draw[densely dotted]
	(-0.36,0) -- ++(0.3,0)
	(0.36,0) -- ++(0.3,0);

\fill
	(0,0.7) circle (0.06);		
\end{scope}

\end{tikzpicture}
\caption{Degenerate quadrilateral $R$.}
\label{qr}
\end{figure}
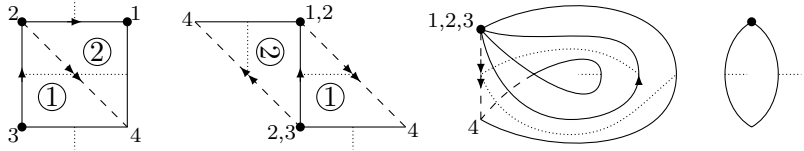

The fifth quadrilateral in the second row is a further degeneration of $R$ by identifying the vertex $4$ with the vertex $1,2,3$ (already identified in $R$). Again there are two ways $R_1,R_2$ for such a tile to embed into surfaces.

In each of the third row of Figure \ref{quad_degenerate}, we identify two pairs of edges. This yields surfaces $T^2,2P^2,P^2,S^2,P^2,2P^2$. Therefore we get specific surfaces tiled by only one quadrilateral. The first, second and third can be dismissed for the same reason as the first and second of the second row. The fourth and fifth can be dismissed because adjacent opposing pair of edges are identified. Only the last $K$ (for Klein bottle) remains, and Figure \ref{qk} explains that this tiling of the Klein bottle is subdivisible.

In the first of Figure \ref{qk}, we use dotted lines to divide the quadrilateral into two parts. We denote the halves of edges by $a_0,a_1,b_0,b_1$. Then the upper half of the quadrilateral is the pentagon in the second of Figure \ref{qk}, and the lower half of the quadrilateral is the pentagon in the fourth of Figure \ref{qk}. Both pentagons are obtained by identifying a pair of edges in a twisted way, and both are thus M\"obius bands. The boundaries of the pentagons consist of three edges $a_1,b_0,x$, as in the third of Figure \ref{qk}. Glueing the two pentagons along the common boundary gives the Klein bottle $2P^2$. There are three vertices. The vertex between $a_1,b_0$ has degree $4$. The vertices between $a_1,x$ and between $b_0,x$ have degree $3$.

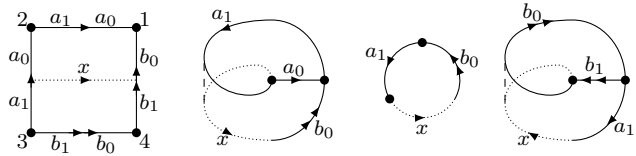
\begin{figure}[ht]
\centering
\begin{tikzpicture}[>=latex,scale=1]


\begin{scope}[xshift=-2.5 cm]

\draw
	(-0.7,-0.7) rectangle (0.7,0.7);
	
\draw[densely dotted]
	(-0.7,0) -- (0.7,0);

\foreach \a in {0,...,3}
\fill[rotate=90*\a]
	(0.7,0.7) circle (0.06);
	
\foreach \a in {1,...,4}
\node at (-45+90*\a:1.15) {\scriptsize \a};

\draw[->] (-0.1,0.7) -- ++(0.2,0);
\draw[->] (-0.7,-0.1) -- ++(0,0.2);
\draw[->] (-0.2,-0.7) -- ++(0.2,0);
\draw[->] (0,-0.7) -- ++(0.2,0);
\draw[->] (0.7,-0.2) -- ++(0,0.2);
\draw[->] (0.7,0) -- ++(0,0.2);
\draw[->] (0,0) -- ++(0.1,0);

\node at (0,0.15) {\scriptsize $x$};
\node at (-0.85,0.3) {\scriptsize $a_0$};
\node at (-0.85,-0.3) {\scriptsize $a_1$};
\node at (0.3,0.8) {\scriptsize $a_0$};
\node at (-0.3,0.8) {\scriptsize $a_1$};
\node at (0.88,0.3) {\scriptsize $b_0$};
\node at (0.88,-0.3) {\scriptsize $b_1$};
\node at (0.3,-0.85) {\scriptsize $b_0$};
\node at (-0.3,-0.85) {\scriptsize $b_1$};

\end{scope}


\foreach \a in {0,1}
{
\begin{scope}[xshift=4*\a cm]

\draw
	(0,0) -- (0.7,0)
	(0.7,0) arc (0:-90:0.7 and 0.8)
	(0.7,0) arc (0:90:0.7 and 0.8) to[out=180,in=90] 
	(-0.9,0.3) to[out=-90,in=180]
	(-0.3,-0.2) to[out=0,in=-90]
	(0,0);

\draw[densely dotted]
	(0,-0.8) to[out=180,in=-90] 
	(-0.9,-0.3) to[out=90,in=180]
	(-0.2,0.2) to[out=0,in=90]
	(0,0);	

\draw[dashed]
	(-0.9,-0.3) -- (-0.9,0.3);

\fill
	(0,0) circle (0.06)
	(0.7,0) circle (0.06);

\end{scope}
}


\draw[->]
	(-0.6,-0.69) -- ++(-30:0.1);
\node at (-0.65,-0.8) {\scriptsize $x$};

\draw[->]
	(0.3,0) -- ++(0.1,0);
\node at (0.3,0.13) {\scriptsize $a_0$};

\draw[->]
	(-0.6,0.69) -- ++(200:0.1);
\node at (-0.65,0.85) {\scriptsize $a_1$};

\draw[->]
	(0.45,-0.6) -- ++(45:0.1);
\draw[->]
	(0.58,-0.43) -- ++(55:0.1);
\node at (0.7,-0.65) {\scriptsize $b_0$};


\begin{scope}[xshift=2cm]

\draw
	(-30:0.5) arc (-30:210:0.5);

\draw[densely dotted]
	(-30:0.5) arc (-30:-150:0.5);

\fill
	(90:0.5) circle (0.06)
	(210:0.5) circle (0.06);

\draw[->]
	(155:0.5) -- (160:0.5);
\node at (150:0.7) {\scriptsize $a_1$};

\draw[->]
	(20:0.5) -- (25:0.5);
\draw[->]
	(40:0.5) -- (45:0.5);
\node at (25:0.7) {\scriptsize $b_0$};

\draw[->]
	(-90:0.5) -- (-85:0.5);
\node at (-90:0.66) {\scriptsize $x$};

\end{scope}


\begin{scope}[xshift=4cm]

\draw[<-]
	(-0.6,-0.69) -- ++(-30:0.1);
\node at (-0.65,-0.8) {\scriptsize $x$};

\draw[->]
	(0.2,0) -- ++(-0.1,0);
\draw[->]
	(0.4,0) -- ++(-0.1,0);
\node at (0.3,0.2) {\scriptsize $b_1$};

\draw[->]
	(-0.55,0.72) -- ++(20:0.1);
\draw[->]
	(-0.35,0.77) -- ++(15:0.1);
\node at (-0.53,0.9) {\scriptsize $b_0$};

\draw[<-]
	(0.45,-0.6) -- ++(45:0.1);
\node at (0.7,-0.65) {\scriptsize $a_1$};

\end{scope}

\end{tikzpicture}
\caption{Degenerate quadrilateral $K$.}
\label{qk}
\end{figure}

Only labeled quadrilaterals in Figure \ref{quad_degenerate} are suitable for subdivisible tilings. The total number is $13$.

\subsection{Orientable Degenerate Tiles}

By Proposition \ref{quad_theroem1}, only $Q,Q_{13},Q_{13,24}$ can be tiles in subdivisible tilings of orientable surfaces. The first of Figure \ref{q13} is $Q_{13}$, and the second is a disk neighborhood of the identified vertex $1,3$. The union $\hat{Q}_{13}$ of the two means that corners labeled by $\circled{1}$ and $\circled{3}$ in the quadrilateral are the two fans in the disk neighborhood. To help in understanding the picture, we imagine walking around the quadrilateral in the direction $1\to 2\to 3\to 4\to 1$. We go into the  vertex $1$ at $i_1$ ($i$ for {\em into}) and come out of $1$ at $o_1$ ($o$ for {\em out}). We use similar notations around the other identified vertices. We also note that, to satisfy Proposition \ref{quad_theroem1}, the direction $i_1\to o_1$ around the circle in the second picture must be the same as the direction $i_3\to o_3$ around the circle.

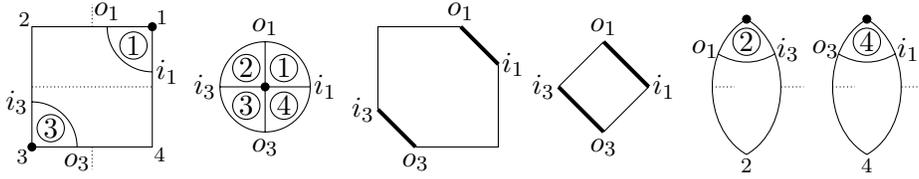
\begin{figure}[h]
\centering
\begin{tikzpicture}[>=latex,scale=1]


\draw
	(-0.8,-0.8) rectangle (0.8,0.8)
	(0.2,0.8) arc (180:270:0.6)
	(-0.2,-0.8) arc (0:90:0.6);	

\draw[densely dotted]
	(-0.8,0) -- (0.8,0)
	(0,0.8) -- ++(0,0.3)
	(0,-0.8) -- ++(0,-0.3);

\fill
	(0.8,0.8) circle (0.06)
	(-0.8,-0.8) circle (0.06);
	
\node at (1,0.2) {\small $i_1$};
\node at (0.2,1) {\small $o_1$};
\node at (-1,-0.2) {\small $i_3$};
\node at (-0.2,-1) {\small $o_3$};
		
\node at (0.93,0.93) {\scriptsize $1$};
\node at (-0.9,0.9) {\scriptsize $2$};
\node at (-0.93,-0.93) {\scriptsize $3$};
\node at (0.9,-0.9) {\scriptsize $4$};

\node[draw, shape=circle, inner sep=0.5] at (0.55,0.55) {\small $1$};
\node[draw, shape=circle, inner sep=0.5] at (-0.55,-0.55) {\small $3$};


\begin{scope}[xshift=2.3cm]

\draw
	(0,0) circle (0.6)
	(-0.6,0) -- (0.6,0)
	(0,-0.6) -- (0,0.6);	

\fill
	(0,0) circle (0.06);
	
\node at (0.8,0) {\small $i_1$};
\node at (0,0.8) {\small $o_1$};
\node at (-0.8,0) {\small $i_3$};
\node at (0,-0.8) {\small $o_3$};

\node[draw, shape=circle, inner sep=0.5] at (0.25,0.25) {\small $1$};
\node[draw, shape=circle, inner sep=0.5] at (-0.25,0.25) {\small $2$};
\node[draw, shape=circle, inner sep=0.5] at (-0.25,-0.25) {\small $3$};
\node[draw, shape=circle, inner sep=0.5] at (0.25,-0.25) {\small $4$};
			
\end{scope}


\begin{scope}[xshift=4.6cm]

\draw
	(-0.3,-0.8) -- (0.8,-0.8) -- (0.8,0.3)
	(0.3,0.8) -- (-0.8,0.8) -- (-0.8,-0.3);	

\draw[line width=1.5]
	(-0.3,-0.8) -- (-0.8,-0.3)
	(0.3,0.8) -- (0.8,0.3);

\node at (1,0.3) {\small $i_1$};
\node at (0.3,1) {\small $o_1$};
\node at (-1,-0.3) {\small $i_3$};
\node at (-0.3,-1) {\small $o_3$};

\end{scope}


\begin{scope}[xshift=6.8cm]

\draw
	(0,-0.6) -- (0.6,0)
	(0,0.6) -- (-0.6,0);	

\draw[line width=1.5]
	(-0.6,0) -- (0,-0.6)
	(0.6,0) -- (0,0.6);

\node at (0.8,0) {\small $i_1$};
\node at (0,0.8) {\small $o_1$};
\node at (-0.8,0) {\small $i_3$};
\node at (0,-0.8) {\small $o_3$};

\end{scope}


\begin{scope}[xshift=9.5cm]

\foreach \a in {-1,1}
{
\begin{scope}[xshift=0.8*\a cm]

\draw
	(0,-0.9) to[out=30,in=-30] (0,0.9)
	(0,-0.9) to[out=150,in=-150] (0,0.9)
	(-0.38,0.44) to[out=-30,in=210] (0.38,0.44);

\draw[densely dotted]
	(-0.46,0) -- ++(0.3,0)
	(0.46,0) -- ++(0.3,0);

\fill
	(0,0.9) circle (0.06);
			
\end{scope}
}

\node at (-0.8,-1.05) {\scriptsize 2};
\node at (-1.35,0.5) {\small $o_1$};
\node at (-0.25,0.5) {\small $i_3$};
\node[draw, shape=circle, inner sep=0.5] at (-0.8,0.6) {\small $2$};

\node at (0.8,-1.05) {\scriptsize 4};
\node at (1.35,0.5) {\small $i_1$};
\node at (0.25,0.5) {\small $o_3$};
\node[draw, shape=circle, inner sep=0.5] at (0.8,0.6) {\small $4$};

\end{scope}

\end{tikzpicture}
\caption{$\hat{Q}_{13}=S^2-2D^2$, and its boundary.}
\label{q13}
\end{figure}

The union of the first and second of Figure \ref{q13} can be more easily interpreted as glueing the third and fourth pictures along the indicated thick lines, so that the labels $i_*$ and $o_*$ match. This is a cylinder, or the sphere with two disks removed. The boundary of the cylinder consists of two circles, illustrated by the fifth picture. At the top of two circles, we also indicate the corners to fit into the second picture. The boundary is obtained by combining the normal edges $i_1-4-o_3$, $o_1-2-i_3$ of the third picture (``$-$'' for edges of the quadrilateral), and the normal edges $i_1\frown o_3$, $o_1\frown i_3$ of the fourth picture (``$\frown$'' for edges of the disk). The two boundary circles are $o_1-2-i_3\frown o_1$ and $i_1-4-o_3\frown i_1$.

Each boundary circle is the boundary of the subdivisible tiling $T_2$ in Figure \ref{subdiv_example}. Therefore the union $Q_{13}\cup T_2\cup T_2$ is a subdivisible tiling of the sphere by one $Q_{13}$ and twelve non-degnerate tiles $Q$. By connected sum constructions, we find that $Q_{13}$ can appear in subdivisible tilings of any surface. 

Next we turn to $Q_{13,24}$. There are two neighborhoods, one around the $\bullet$-vertex $1,3$ and another around the $\circ$-vertex $2,4$. The union $\hat{Q}_{13,24}$ of the tile with two neighborhoods is the union of the first three pictures in Figure \ref{q1324}. By combining the normal edges of the first three pictures, we get the boundary $o_1-i_2\frown o_4-i_1\frown o_3-i_4\frown o_2-i_3\frown o_1-i_2\frown o_4$ of $\hat{Q}_{13,24}$ in the fourth picture. Since the boundary is one circle, we have $\hat{Q}_{13,24}=S-D^2$ for an oriented surface $S$. By $\chi(S)=\chi(\hat{Q}_{13,24})+\chi(D^2)-\chi(S^1)=(3-4)+1-0=0$, we find $S=T^2$. Therefore $\hat{Q}_{13,24}=T^2-D^2$.

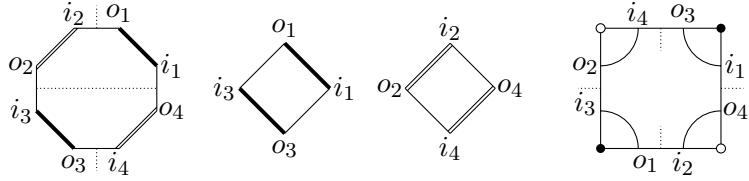
\begin{figure}[h]
\centering
\begin{tikzpicture}[>=latex,scale=1]


\foreach \a in {1,-1}
{
\begin{scope}[scale=\a]

\draw
	(0.3,0.8) -- (-0.3,0.8) -- (-0.8,0.3)
	(-0.25,0.8) -- (-0.8,0.25)
	(0.8,-0.3) -- (0.8,0.3);	

\draw[line width=1.5]
	(0.3,0.8) -- (0.8,0.3);

\end{scope}
}

\draw[densely dotted]
	(-0.8,0) -- (0.8,0)
	(0,0.8) -- ++(0,0.3)
	(0,-0.8) -- ++(0,-0.3);
	
\node at (1,0.3) {\small $i_1$};
\node at (0.3,1) {\small $o_1$};
\node at (-1,0.3) {\small $o_2$};
\node at (-0.3,1) {\small $i_2$};
\node at (-1,-0.3) {\small $i_3$};
\node at (-0.3,-1) {\small $o_3$};
\node at (1,-0.3) {\small $o_4$};
\node at (0.3,-1) {\small $i_4$};


\begin{scope}[xshift=2.5cm]

\draw
	(0,-0.6) -- (0.6,0)
	(0,0.6) -- (-0.6,0);	

\draw[line width=1.5]
	(-0.6,0) -- (0,-0.6)
	(0.6,0) -- (0,0.6);

\node at (0.8,0) {\small $i_1$};
\node at (0,0.8) {\small $o_1$};
\node at (-0.8,0) {\small $i_3$};
\node at (0,-0.8) {\small $o_3$};

\end{scope}


\begin{scope}[xshift=4.7cm]

\foreach \a in {0,...,3}
\draw[rotate=90*\a]
	(0.6,0) -- (0,0.6);	

\draw
	(0.03,0.57) -- (-0.57,-0.03)
	(-0.03,-0.57) -- (0.57,0.03);

\node at (0,0.8) {\small $i_2$};
\node at (-0.8,0) {\small $o_2$};
\node at (0,-0.8) {\small $i_4$};
\node at (0.8,0) {\small $o_4$};

\end{scope}


\begin{scope}[xshift=7.5cm]

\foreach \a in {0,...,3}
\draw[rotate=90*\a]
	(0.8,-0.8) -- (0.8,0.8)
	(0.8,0.3) arc (-90:-180:0.5);	

\draw[densely dotted]
	(-0.8,0) -- ++(-0.3,0)
	(0.8,0) -- ++(0.3,0)
	(0,0.8) -- ++(0,-0.3)
	(0,-0.8) -- ++(0,0.3);
	
\fill
	(0.8,0.8) circle (0.06)
	(-0.8,-0.8) circle (0.06);
\filldraw[fill=white]
	(-0.8,0.8) circle (0.06)
	(0.8,-0.8) circle (0.06);
	
\node at (1,0.3) {\small $i_1$};
\node at (0.3,1) {\small $o_3$};
\node at (-1,0.3) {\small $o_2$};
\node at (-0.3,1) {\small $i_4$};
\node at (-1,-0.2) {\small $i_3$};
\node at (-0.2,-1) {\small $o_1$};
\node at (1,-0.3) {\small $o_4$};
\node at (0.3,-1) {\small $i_2$};

\end{scope}

\end{tikzpicture}
\caption{$\hat{Q}_{13,24}=T^2-D^2$, and its boundary.}
\label{q1324}
\end{figure}

The boundary of $\hat{Q}_{13,24}$ can be filled by a single copy of $Q_{13,24}$. Therefore we get a subdivisible tiling $Q_{13,24}\cup Q_{13,24}$ of torus, consisting of exactly two $Q_{13,24}$ and no other tiles (and exactly two vertices, both of degree $4$). On the other hand, we can also use $T_4$ in Figure \ref{subdiv_example} to get a subdivisible tiling $Q_{13,24}\cup T_4$ of a torus. Then we may use connected sum to show that $Q_{13,24}$ can appear in subdivisible tilings of surfaces of the form $T^2\# S$, where $S$ can be any surface. 

Surfaces that are not of the form $T^2\# S$ are $S^2,P^2,2P^2$. If $Q_{13,24}$ appears in a subdivisible tiling of any such surface, then the surface must contain $\hat{Q}_{13,24}=T^2-D^2$. Since this is not the case for these three surfaces, we conclude that surfaces in which $Q_{13,24}$ can appear in subdivisible tilings are $kT^2$ ($k\ge 1$) and $kP^2$ ($k\ge 3$). 

We may specialise Proposition \ref{quad_theroem2} to the three orientable tiles and get the following. 

\begin{theorem}\label{sphere}
The tiles in a quadrilateral tiling of the sphere are $Q$ and $Q_{13}$. Moreover, the tiling is always subdivisible.
\end{theorem}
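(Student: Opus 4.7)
The plan is to reduce the theorem to three facts already assembled in Section~2. First I will handle the ``moreover'' clause, namely that every quadrilateral tiling of $S^2$ is subdivisible. The cleanest route is through Proposition~\ref{quad_theroem2}: since $\pi_1(S^2)$ is trivial, there are no fundamental cycles whose parity needs checking, so the criterion for subdivisibility on an orientable surface is satisfied vacuously. An alternative, more combinatorial route (via Proposition~\ref{quad_theroem3}) would be to verify that the $1$-skeleton is bipartite. Any cycle on the sphere is null-homotopic, and each elementary face-replacement used in such a null-homotopy trades $i$ edges of a quadrilateral tile for the other $4-i$, changing the edge count by $4-2i$, which is even; hence every cycle has even length and the graph is bipartite.

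Next, having established subdivisibility, I invoke the observation recorded at the beginning of Section~2.3: among the 13 permissible tiles of Figure~\ref{quad_degenerate}, only $Q$, $Q_{13}$, and $Q_{13,24}$ admit an orientation assignment compatible with Proposition~\ref{quad_theroem1} on an orientable surface. Since $S^2$ is orientable, the tiles appearing in our tiling must come from this short list.

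It remains to rule out $Q_{13,24}$ on the sphere. The computation just before the theorem gives $\hat{Q}_{13,24} = T^2 - D^2$. If $Q_{13,24}$ were a tile in a tiling of $S^2$, then $\hat{Q}_{13,24}$ would be a compact subsurface of $S^2$; but every compact subsurface of $S^2$ is planar (has genus zero, by the classification of surfaces with boundary, or directly by Jordan--Sch\"onflies applied to its boundary circle), while $T^2 - D^2$ has genus one. This contradiction excludes $Q_{13,24}$, leaving $Q$ and $Q_{13}$ as the only possible tiles.

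There is no genuine obstacle; the theorem is essentially a specialization. The only step with any topological content is the non-embedding $T^2 - D^2 \not\hookrightarrow S^2$, which is standard. Everything else is bookkeeping against Propositions~\ref{quad_theroem1} and \ref{quad_theroem2} (or \ref{quad_theroem3}).
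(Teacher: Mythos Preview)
Your proposal is correct and follows essentially the same approach as the paper. The paper's own justification is the single remark that the fundamental group of the sphere is trivial, so the fundamental-cycle condition of Proposition~\ref{quad_theroem2} is vacuous; the restriction to $Q,Q_{13},Q_{13,24}$ on orientable surfaces and the exclusion of $Q_{13,24}$ via $\hat{Q}_{13,24}=T^2-D^2$ are exactly the facts from Section~2.3 you invoke, so your write-up is simply a more explicit unpacking of the paper's specialization.
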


The fundamental group of the sphere is trivial, and hence there is no need to verify the fundamental cycle condition.

\begin{theorem}\label{torus}
The tiles in a quadrilateral tiling of the torus are $Q$, $Q_{13}$ and $Q_{13,24}$. Moreover, the tiling is subdivisible if and only if one of the following holds:
\begin{enumerate}
\item All tiles are $Q$ or $Q_{13}$, and the two fundamental cycles are represented by cycles with an even number of edges.
\item $Q_{13,24}$ is a tile.
\end{enumerate}
\end{theorem}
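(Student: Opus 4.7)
The plan is to establish the tile classification and then the subdivisibility criterion, in parallel with the sphere case of Theorem \ref{sphere}.

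For the tile classification, I would invoke the reduction from the subsection on orientable degenerate tiles: the torus is orientable, which excludes the non-orientable labelled tiles $R$, $R_1$, $R_2$, $K$ from Figure \ref{quad_degenerate}, while Proposition \ref{quad_theroem1} excludes any tile containing a loop edge at an identified vertex, because the two ends of a loop coincide at a single vertex and cannot carry opposite orientations. This leaves precisely $Q$, $Q_{13}$, and $Q_{13,24}$.

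For the subdivisibility criterion, I would apply Proposition \ref{quad_theroem2} to the torus, whose fundamental group has two generators $a_1, b_1$; subdivisibility is equivalent to representing each by a cycle of even length in the tiling. If $Q_{13,24}$ does not appear then every tile is $Q$ or $Q_{13}$, and the even-length condition on $a_1, b_1$ is exactly condition (1), so Proposition \ref{quad_theroem2} gives the biconditional in that case. If $Q_{13,24}$ does appear, I need to show that subdivisibility is automatic. The key input is $\hat{Q}_{13,24} = T^2 - D^2$, established just above the theorem, so the complement of $\hat{Q}_{13,24}$ in $T^2$ is a disk and the fundamental group of the torus is generated by cycles lying inside $\hat{Q}_{13,24}$. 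The four tile edges of $Q_{13,24}$ all connect the two identified vertices $\{1,3\}$ and $\{2,4\}$, yielding four parallel 2-edge loops in the tiling; consulting the cyclic arrangement in Figure \ref{q1324}, I would select two of these that form a basis of $\pi_1(T^2)$. Since 2 is even, Proposition \ref{quad_theroem2} delivers subdivisibility.

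The main obstacle I anticipate is the last step: verifying that a suitable pair among the four 2-edge loops actually spans $\pi_1(T^2) \cong \mathbb{Z}^2$ rather than merely generating a cyclic subgroup (for example, all four might be homologous if the cyclic orderings at the two identified vertices were unfavourable). This is a direct homological computation inside $T^2 - D^2$, which I would carry out using the explicit boundary description in Figure \ref{q1324} to track how the four loops sit with respect to the two fundamental directions of the torus, and confirm that two independent ones can always be extracted.
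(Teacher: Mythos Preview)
Your proposal is correct and follows the paper's approach. The paper's entire proof is one sentence: ``If $Q_{13,24}$ is a tile, then the two fundamental cycles are represented by the cycles $1-2-3$ and $2-3-4$. Both have an even number of edges.'' Your plan---reduce the tile list via orientability and Proposition~\ref{quad_theroem1}, invoke Proposition~\ref{quad_theroem2} for case~(1), and use $\hat{Q}_{13,24}=T^2-D^2$ to handle case~(2)---is exactly this, just stated more carefully.

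The obstacle you anticipate (verifying that two of the $2$-edge cycles actually span $\pi_1(T^2)$) is not a real difficulty, and the paper does not pause over it. Since $\hat{Q}_{13,24}=T^2-D^2$, the inclusion induces a surjection on $\pi_1$; the $1$-skeleton of $\hat{Q}_{13,24}$ has two vertices and four edges, so its fundamental group is free of rank~$3$, and attaching the single $2$-cell (the tile) reduces this to $F_2\cong\pi_1(T^2-D^2)$. Hence any two of the cycles $1\!-\!2\!-\!3$, $2\!-\!3\!-\!4$, $3\!-\!4\!-\!1$ that are not related by the single tile relation already generate. The paper simply names $1\!-\!2\!-\!3$ and $2\!-\!3\!-\!4$ and moves on; you may do the same. (Minor point: there are six $2$-edge cycles, not ``four parallel'' ones.)
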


If $Q_{13,24}$ is a tile, then the two fundamental cycles are represented by the cycles $1-2-3$ and $2-3-4$. Both have an even number of edges.

\subsection{Non-orientable Degenerate Tiles}

If a subdivisible tiling contains tiles other than $Q,Q_{13},Q_{13,24}$, then the surface is not orientable. The detailed analysis about which tile can appear on what surface is more complicated, but still follows the method for the orientable case.

The first of Figure \ref{q12} is the degenerate quadrilateral $Q_{12}$, and the second is a disk neighborhood of the identified vertex $1,2$. Note that by Proposition \ref{quad_theroem1}, along the boundary of the disk neighborhood, $i_1\to o_1$ and $i_2\to o_2$ are in opposite directions. The union $\hat{Q}_{12}$ of the two is the glueing of the third and fourth pictures along the indicated thick lines. The boundary of $\hat{Q}_{12}$ is one circle $i_1\frown i_1-o_1\frown o_2-3-4-i_1$, given by the fifth picture. Therefore $\hat{Q}_{12}=S-D^2$ for a non-orientable surface $S$, with $\chi(S)=\chi(\hat{Q}_{12})+\chi(D^2)-\chi(S^1)=(2-2)+1-0=1$. This implies $S=P^2$, and $\hat{Q}_{12}=P^2-D^2$ is a M\"obius band.

\begin{figure}[h]
\centering
\begin{tikzpicture}[>=latex,scale=1]


\draw
	(-0.8,-0.8) rectangle (0.8,0.8)
	(0.2,0.8) arc (180:270:0.6)
	(-0.2,0.8) arc (0:-90:0.6);	

\draw[densely dotted]
	(-0.8,0) -- (0.8,0)
	(0,0.8) -- ++(0,0.3)
	(0,-0.8) -- ++(0,-0.3);
			
\fill
	(0.8,0.8) circle (0.06)
	(-0.8,0.8) circle (0.06);
	
\node at (1,0.2) {\small $i_1$};
\node at (0.2,1) {\small $o_1$};
\node at (-0.2,1) {\small $i_2$};
\node at (-1,0.2) {\small $o_2$};
		
\node at (0.93,0.93) {\scriptsize $1$};
\node at (-0.93,0.93) {\scriptsize $2$};
\node at (-0.9,-0.9) {\scriptsize $3$};
\node at (0.9,-0.9) {\scriptsize $4$};

\node[draw, shape=circle, inner sep=0.5] at (0.55,0.55) {\small $1$};
\node[draw, shape=circle, inner sep=0.5] at (-0.55,0.55) {\small $2$};


\begin{scope}[xshift=2.5cm]

\draw
	(0,0) circle (0.6)
	(-0.6,0) -- (0.6,0)
	(0,-0.6) -- (0,0.6);	

\fill
	(0,0) circle (0.06);
	
\node at (0.8,0) {\small $i_1$};
\node at (0,0.8) {\small $o_1$};
\node at (0,-0.8) {\small $i_2$};
\node at (-0.8,0) {\small $o_2$};

\node[draw, shape=circle, inner sep=0.5] at (0.25,0.25) {\small $1$};
\node[draw, shape=circle, inner sep=0.5] at (-0.25,-0.25) {\small $2$};
			
\end{scope}


\begin{scope}[xshift=5cm]

\draw
	(-0.8,0.3) -- (-0.8,-0.8) -- (0.8,-0.8) -- (0.8,0.3)
	(-0.3,0.8) -- (0.3,0.8);	

\draw[line width=1.5]
	(-0.3,0.8) -- (-0.8,0.3)
	(0.3,0.8) -- (0.8,0.3);

\node at (1,0.3) {\small $i_1$};
\node at (0.3,1) {\small $o_1$};
\node at (-0.3,1) {\small $i_2$};
\node at (-1,0.3) {\small $o_2$};

\end{scope}


\begin{scope}[xshift=7.3cm]

\draw
	(0,-0.6) -- (0.6,0)
	(0,0.6) -- (-0.6,0);	

\draw[line width=1.5]
	(-0.6,0) -- (0,-0.6)
	(0.6,0) -- (0,0.6);

\node at (0.8,0) {\small $i_1$};
\node at (0,0.8) {\small $o_1$};
\node at (-0.8,0) {\small $o_2$};
\node at (0,-0.8) {\small $i_2$};

\end{scope}


\begin{scope}[xshift=9.6cm]

\draw
	(-0.8,-0.8) rectangle (0.8,0.8)
	(0.2,0.8) arc (180:270:0.6)
	(-0.2,0.8) arc (0:-90:0.6);	

\draw[densely dotted]
	(-0.8,0) -- ++(-0.3,0)
	(0.8,0) -- ++(0.3,0)
	(0,0.8) -- ++(0,-0.3)
	(0,-0.8) -- ++(0,0.3);
			
\fill
	(0.8,0.8) circle (0.06)
	(-0.8,0.8) circle (0.06);
	
\node at (1,0.2) {\small $i_1$};
\node at (0.2,1) {\small $i_2$};
\node at (-0.2,1) {\small $o_1$};
\node at (-1,0.2) {\small $o_2$};
		
\node at (-0.9,-0.9) {\scriptsize $3$};
\node at (0.9,-0.9) {\scriptsize $4$};

\end{scope}

\end{tikzpicture}
\caption{$\hat{Q}_{12}=P^2-D^2$.}
\label{q12}
\end{figure}
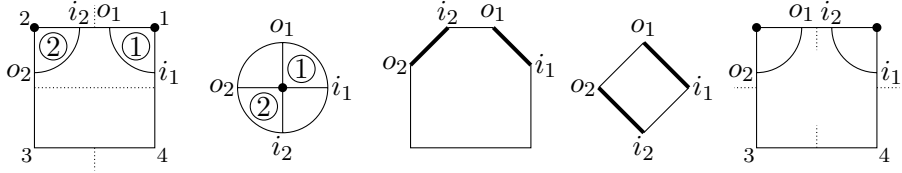

We may determine the other degenerate tiles in similar way. When three vertices $1,2,3$ are identified, the neighborhood of the identified vertex has two possible configurations, $123$ and $132$. See the second and third of Figure \ref{q123}. 
If we glue the first and second pictures, then we get two boundary circles $i_1-4-o_3\frown i_1$ and $o_1-i_2\frown i_3-o_2\frown o_1$. This implies $\hat{Q}_{123}=S-2D^2$, with $\chi(S)=\chi(Q_{123})+2=(2-3)+2=1$. Therefore $\hat{Q}_{123}=P^2-2D^2$ is the punctured M\"obius band. If we glue the first and third pictures, then we get one boundary circle $i_1-4-o_3\frown o_2-i_3\frown o_1-i_2\frown i_1$, and $\hat{Q}_{132}=2P^2-D^2$ is the punctured Klein bottle.

\begin{figure}[h]
\centering
\begin{tikzpicture}[>=latex,scale=1]

\draw
	(0.8,0.3) -- (0.8,-0.8) -- (-0.3,-0.8)
	(-0.8,-0.3) -- (-0.8,0.3)
	(-0.3,0.8) -- (0.3,0.8);	

\draw[line width=1.5]
	(-0.3,0.8) -- (-0.8,0.3)
	(0.3,0.8) -- (0.8,0.3)
	(-0.3,-0.8) -- (-0.8,-0.3);

\node at (1,0.3) {\small $i_1$};
\node at (0.3,1) {\small $o_1$};
\node at (-1,0.3) {\small $o_2$};
\node at (-0.3,1) {\small $i_2$};
\node at (-1,-0.3) {\small $i_3$};
\node at (-0.3,-1) {\small $o_3$};


\begin{scope}[xshift=2.5 cm]

\foreach \a in {0,1,2}
{
\begin{scope}[rotate=120*\a]

\draw
	(0:0.7) -- (60:0.7);

\draw[line width=1.5]
	(60:0.7) -- (120:0.7);

\end{scope}
}	

\node at (60:0.9) {\small $i_1$};
\node at (120:0.9) {\small $o_1$};
\node at (180:0.9) {\small $o_2$};
\node at (60:-0.9) {\small $i_2$};
\node at (120:-0.9) {\small $i_3$};
\node at (0:0.9) {\small $o_3$};

\node at (0,0) {123};

\end{scope}


\begin{scope}[xshift=5 cm]

\foreach \a in {0,1,2}
{
\begin{scope}[rotate=120*\a]

\draw
	(0:0.7) -- (60:0.7);

\draw[line width=1.5]
	(60:0.7) -- (120:0.7);

\end{scope}
}	

\node at (60:0.9) {\small $i_1$};
\node at (120:0.9) {\small $o_1$};
\node at (180:0.9) {\small $i_3$};
\node at (60:-0.9) {\small $o_3$};
\node at (120:-0.9) {\small $o_2$};
\node at (0:0.9) {\small $i_2$};

\node at (0,0) {132};
\
\end{scope}
	
\end{tikzpicture}
\caption{$\hat{Q}_{123}=P^2-2D^2$ and $\hat{Q}_{132}=2P^2-D^2$.}
\label{q123}
\end{figure}
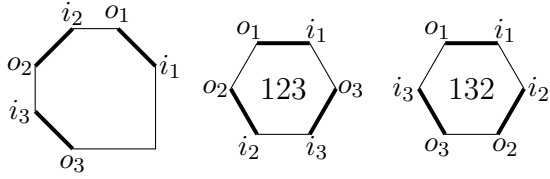

When all four vertices $1,2,3,4$ are identified, the neighborhood configuration is determined by the shape of the cycle connecting its four vertices. The cycle $1234$ has the shape $\square$, and $1243$ has the shape $\Join$. This gives two degenerate tiles, $Q_{1234}$ and $Q_{1243}$. For further degeneracy of $R$, the identification of $4$ and $1,2,3$ can be normal or twisted, which gives $R_1$ and $R_2$, respectively.

\begin{table}[ht]
\centering
\begin{tabular}{|c|c|c|c|}
\hline 
Tile & Boundary & Union with Nbhd & Min Surface  \\
\hline 
$Q_{12}$ & $4_{12}$ & $P^2-D^2$ & $P^2$ \\
\hline 
$Q_{13}$ & $2_1+2_1$ & $S^2-2D^2$ & $S^2$ \\
\hline 
$Q_{123}$ & $2_1+2_{12}$ & $P^2-2D^2$ & $P^2$ \\
\hline 
$Q_{132}$ & $4_{123}$ & $2P^2-D^2$ & $2P^2$ \\
\hline 
$Q_{1234}$ & $2_{12}+2_{12}$ & $2P^2-2D^2$ & $2P^2$ \\
\hline 
$Q_{1243}$ & $4_{1234}$ & $3P^2-D^2$ & $3P^2$ \\
\hline 
$Q_{12,34}$ & $4_{12,34}$ & $2P^2-D^2$ & $2P^2$ \\
\hline 
$Q_{13,24}$ & $4_{13,24}$ & $T^2-D^2$ & $T^2$ \\
\hline 
$R$ & $2_1$ & $P^2-D^2$  & $P^2$ \\
\hline 
$R_1$ & $1_1+1_1$ & $P^2-2D^2$ & $2P^2$ \\
\hline 
$R_2$ & $2_{12}$ & $2P^2-D^2$  & $2P^2$ \\
\hline 
$K$ & $\emptyset$ & $2P^2$  & \\
\hline 
\end{tabular}
\caption{Degenerate tiles and the minimal surfaces they embed into.}
\label{all}
\end{table}

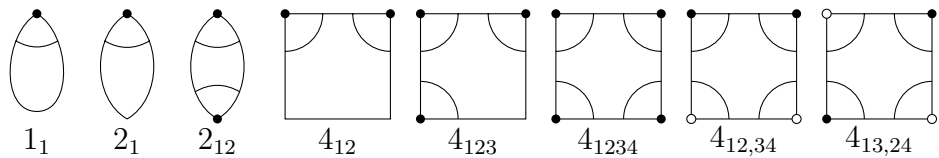
\begin{figure}[h]
\centering
\begin{tikzpicture}[>=latex,scale=1]


\begin{scope}[xshift=-1.8cm]

\draw
	(0,0.7) to[out=210,in=180] 
	(0,-0.6) to[out=0,in=-30] 
	(0,0.7)
	(-0.28,0.34) to[out=-30,in=210] (0.28,0.34);	
			
\fill
	(0,0.7) circle (0.06);

\node at (0,-1) {$1_1$};

\end{scope}


\foreach \a in {-1,1}
{
\begin{scope}[xshift=0.6*\a cm]

\draw
	(0,-0.7) to[out=30,in=-30] (0,0.7)
	(0,-0.7) to[out=150,in=-150] (0,0.7)
	(-0.3,0.34) to[out=-30,in=210] (0.3,0.34);

\fill
	(0,0.7) circle (0.06);
			
\end{scope}
}

\draw[xshift=0.6cm]
	(-0.3,-0.34) to[out=30,in=150] (0.3,-0.34);

\fill
	(0.6,-0.7) circle (0.06);

\node at (-0.6,-1) {$2_1$};
\node at (0.6,-1) {$2_{12}$};


\foreach \a in {0,1,2,3,4} 
\draw[xshift=2.2 cm + 1.8*\a cm]
	(-0.7,-0.7) rectangle (0.7,0.7)
	(0.2,0.7) arc (180:270:0.5)
	(-0.2,0.7) arc (0:-90:0.5);

\foreach \a in {1,2,3,4} 
\draw[xshift=2.2 cm + 1.8*\a cm]
	(-0.2,-0.7) arc (0:90:0.5);

\foreach \a in {2,3,4} 
\draw[xshift=2.2 cm + 1.8*\a cm]
	(0.2,-0.7) arc (180:90:0.5);
	

\foreach \a in {0,1,2,3,4} 
\fill[xshift=2.2 cm + 1.8*\a cm]
	(0.7,0.7) circle (0.06);

\foreach \a in {0,1,2,3} 
\fill[xshift=2.2 cm + 1.8*\a cm]
	(-0.7,0.7) circle (0.06);	
\filldraw[xshift=2.2 cm + 1.8*4 cm, fill=white]
	(-0.7,0.7) circle (0.06);

\foreach \a in {1,2,4} 
\fill[xshift=2.2 cm + 1.8*\a cm]
	(-0.7,-0.7) circle (0.06);
\filldraw[xshift=2.2 cm + 1.8*3 cm, fill=white]
	(-0.7,-0.7) circle (0.06);

\fill[xshift=2.2 cm + 1.8*2 cm]  
	(0.7,-0.7) circle (0.06);
\foreach \a in {3,4}
\filldraw[xshift=2.2 cm + 1.8*\a cm, fill=white]
	(0.7,-0.7) circle (0.06);


\node at (2.2,-1) {$4_{12}$};
\node at (4,-1) {$4_{123}$};
\node at (5.8,-1) {$4_{1234}$};
\node at (7.6,-1) {$4_{12,34}$};
\node at (9.4,-1) {$4_{13,24}$};
			
\end{tikzpicture}
\caption{Boundaries of degenerate tiles.}
\label{boundary}
\end{figure}

Table \ref{all} gives the complete list of degenerate tiles, their boundaries and the minimal surfaces they embed into. The notations for boundaries are given in Figure \ref{boundary}. The vertices decorated with $\bullet$ or $\circ$ are the identified vertices of the original quadrilateral, and already have degree $\ge 3$. The undecorated vertices are the original unidentified vertices, and have degree $2$. We omit the subdivisions on the boundary edges.

The fourth column in Table \ref{all} is the minimal surfaces on which the tiles may appear in some subdivisible tiling. Notice the boundaries $2_*$ and $4_*$ can be filled in by subdivisible tilings $T_2$ and $T_4$ in Figure \ref{subdiv_example}. For tiles that are not $R_1$ and $K$, therefore, we find the surface $S_{\min}$ in the fourth column of Table \ref{all}, such that the tile can appear in a subdivisible tiling of $S_{\min}$. By connected sum constructions, the surface can also appear in a subdivisible tiling of $S_{\min}\# S$ for any other surface $S$. For example, $Q_{1234}$ can be a tile in subdivisible tilings of all non-orientable surfaces except $P^2$.

We already know $K$ must be the single tile of a subdivisible tiling of $2P^2$. Therefore $K$ cannot be a tile on any other surface.

It remains to consider $R_1$. We can certainly glue two copies of $R_1$ along their common boundary to find that $2P^2$ has a subdivisible tiling consisting of exactly two $R_1$. In order to do the connected sum operation, so that $R_1$ appears in subdivisible tilings of $2P^2\# S$, we may insert a copy of $T_2$ between two $1_1$. See Figure \ref{qr1}.

\begin{figure}[h]
\centering
\begin{tikzpicture}[>=latex,scale=1]

\draw
	(0,0) circle (1)
	(0,1) to[out=210,in=180] 
	(0,0.2) to[out=0,in=-30] 
	(0,1)
	(-0.4,-0.6) -- (-0.3,-0.3) -- (-0.3,0) 
	-- (0.3,0) -- (0.3,-0.3) -- (0.4,-0.6) -- cycle
	(-0.3,-0.3) -- (0.3,-0.3)
	(0,1) to[out=-20,in=20] (0.3,0)
	(0,1) to[out=-10,in=20] (0.4,-0.6)
	(0,1) to[out=200,in=160] (-0.3,0)
	(0,1) to[out=190,in=160] (-0.4,-0.6);

\fill
	(0,1) circle (0.06);
	
\node at (0,0.6) {$1_1$};
\node at (0.9,-0.9) {$1_1$};

\end{tikzpicture}
\caption{$T_2$ between two $1_1$.}
\label{qr1}
\end{figure}
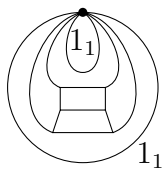

On the other hand, if $R_1$ appears in a subdivisible tiling on $P^2$, then each component of the boundary of $R_1$ must be filled by a disk. In other words, there is a subdivisible tiling of the disk with $1_1$ as the boundary. On the other hand, we note that the boundary of any tile in Table \ref{all} consists of either $2$ or $4$ edges. Therefore the boundary of any tiling with these tiles must consist of even number of edges. This implies that a single $1_1$ cannot be a boundary. We conclude that the surfaces on which $R_1$ can appear are $kP^2$, for $k\ge 2$.

By Table \ref{all}, only $Q,Q_{12},Q_{13},Q_{123},R$ can appear in a subdivisible tiling of $P^2$. We may specialise Proposition \ref{quad_theroem2} to the projective space, which has only one fundamental cycle. 

\begin{theorem}\label{torus}
The tiles in a quadrilateral tiling of projective space are $Q$, $Q_{13}$, $Q_{12}$, $Q_{123}$ and $R$. Moreover, the tiling is subdivisible if and only if one of the following holds:
\begin{enumerate}
\item All tiles are $Q$ or $Q_{13}$, and the fundamental cycle is represented by a cycle with an odd number of edges.
\item One of $Q_{12},Q_{123},R$ is a tile.
\end{enumerate}
\end{theorem}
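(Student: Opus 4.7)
The plan is to combine Table \ref{all} with Proposition \ref{quad_theroem2}(2). From the fourth column of Table \ref{all}, the degenerate tiles whose minimal embedding surface is $S^2$ or $P^2$ are exactly $Q_{13}, Q_{12}, Q_{123}, R$; every other degenerate tile requires a surface of greater complexity. Together with the non-degenerate tile $Q$, this gives the five tile types in the theorem.

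For the subdivisibility criterion, I apply Proposition \ref{quad_theroem2}(2): because $\pi_1 P^2$ has a single fundamental cycle, the tiling is subdivisible if and only if this cycle is represented by a cycle of odd edge-length. If no tile of type $Q_{12}, Q_{123}, R$ appears, then all tiles are $Q$ or $Q_{13}$ and the condition must be imposed as the extra hypothesis of part (i). The substance of the proof is therefore the converse direction in case (ii): if one of $Q_{12}, Q_{123}, R$ is a tile, then the fundamental cycle condition is automatically satisfied.

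To handle case (ii), I would exhibit in each sub-case a loop edge in the tiling graph (a length-$1$ cycle) which represents the non-trivial generator of $\pi_1 P^2$. When $Q_{12}$ is a tile, the identification $1{=}2$ turns the edge $1$-$2$ into a loop which is the core circle of the M\"obius band $\hat{Q}_{12} = P^2 - D^2$, hence orientation-reversing. When $Q_{123}$ is a tile, one of the loops $1$-$2$ or $2$-$3$ is the orientation-reversing core circle inside the punctured M\"obius band $\hat{Q}_{123} = P^2 - 2D^2$. When $R$ is a tile, the self-identified pair of edges glue to a single loop at the identified vertex $1{=}2{=}3$ which is the core circle of $\hat{R} = P^2 - D^2$. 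In each case the distinguished cycle consists of a single edge, hence has odd length, and Proposition \ref{quad_theroem2}(2) gives subdivisibility.

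The main technical point is verifying that these distinguished loops are indeed orientation-reversing rather than null-homotopic. This rests on identifying each loop with the core circle of the appropriate M\"obius band, which can be read off from the pictures of $\hat{Q}_{12}, \hat{Q}_{123}, \hat{R}$ in Figures \ref{q12}, \ref{q123}, and \ref{qr}, where the opposite orientations of $i_k \to o_k$ at the identified vertex force the transport around the loop to reverse local orientation. Once this is confirmed, the rest of the argument is a direct bookkeeping exercise.
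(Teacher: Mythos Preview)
Your proposal is correct and follows essentially the same route as the paper: both read the admissible tile list off Table~\ref{all}, invoke Proposition~\ref{quad_theroem2}(2) for the single fundamental cycle of $P^2$, and in case~(ii) exhibit the one-edge loop at the identified vertex as a representative of the fundamental cycle. You are somewhat more careful than the paper in justifying that this loop is orientation-reversing (via the M\"obius-band core); the paper simply asserts in one line that ``the loop at the identified vertex represents the fundamental cycle and is a cycle with only one edge.''
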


If one of $Q_{12},Q_{123},R$ is a tile, then the loop at the identified vertex represents the fundamental cycle and is a cycle with only one edge.

\section{Pentagonal Subdivisions}
\label{pent}

By subdividing labeled quadrilaterals in Figure \ref{quad_degenerate} into two halves in either way, we find three types of pentagonal tiles in simple pentagonal subdivisions. See Figure \ref{qconverse1}.

\begin{figure}[h]
\centering
\begin{tikzpicture}[>=latex,scale=1]


\foreach \a in {-1,0,1,2}
\draw[rotate=72*\a]
	(18:0.8) -- (90:0.8);

\draw[densely dotted]
	(234:0.8) -- (-54:0.8)
	(90:0.8) -- ++(0,0.3);

\fill
	(18:0.8) circle (0.1)
	(162:0.8) circle (0.1);

\node at (90:0.6) {\small 3};
\node at (-54:0.95) {\small 3};
\node at (234:0.95) {\small 3};

\node at (0.9,0.6) {$P_1$};
		

\begin{scope}[xshift=3cm]

\draw
	(0,0.3) circle (0.5)
	(-30:0.8) arc (-30:210:0.8);	

\draw[densely dotted]
	(-30:0.8) arc (-30:-150:0.8)
	(0,-0.2) -- ++(0,0.3);

\fill
	(90:0.8) circle (0.1);

\node at (-30:0.95) {\small 3};
\node at (210:0.95) {\small 3};
\node at (0,-0.4) {\small 3};

\node at (0.9,0.6) {$P_2$};
		
\end{scope}


\begin{scope}[xshift=6cm]

\draw
	(0,0) -- (0.7,0)
	(0.7,0) arc (0:-90:0.7 and 0.8)
	(0.7,0) arc (0:90:0.7 and 0.8) to[out=180,in=90] 
	(-0.9,0.3) to[out=-90,in=180]
	(-0.3,-0.2) to[out=0,in=-90]
	(0,0);

\draw[densely dotted]
	(0,-0.8) to[out=180,in=-90] 
	(-0.9,-0.3) to[out=90,in=180]
	(-0.2,0.2) to[out=0,in=90]
	(0,0);	

\draw[dashed]
	(-0.9,-0.3) -- (-0.9,0.3);

\fill
	(0.7,0) circle (0.1);

\node at (0.15,0.2) {\small 3};
\node at (0,-0.6) {\small 3};

\node at (0.9,0.6) {$P_3$};
		
\end{scope}

\end{tikzpicture}
\caption{Pentagonal tiles in simple pentagonal subdivision.}
\label{qconverse1}
\end{figure}
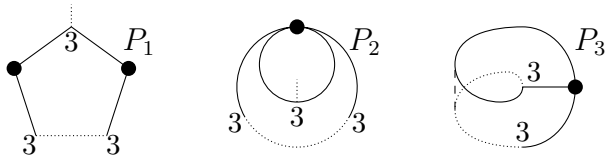

The pentagon $P_1$ is non-degenerate, and $P_2,P_3$ are degenerate. The big $\bullet$ (unlike the small $\bullet$ used in Section \ref{quad}) indicates vertices in the original quadrilateral tiling. If we include the neighborhood disk at the $\bullet$-vertex, then $P_2$ is a M\"obius band. Of course, $P_3$ is already a M\"obius band. The number ``3'' indicates new vertices at the middle of edges of quadrilaterals. These vertices must have degree $3$. The following lists how the pentagons are obtained by subdividing a quadrilateral.
\begin{enumerate}
\item $P_1$: $Q,Q_{12},Q_{13},Q_{123},Q_{132},Q_{12,34},Q_{13,24},R$.
\item $P_2$: $Q_{12},Q_{123},Q_{132},Q_{1234},Q_{1243},Q_{12,34},R_1,R_2$.
\item $P_3$: $R,R_1,R_2,K$.
\end{enumerate}

We answer the question when a pentagonal tiling is the result of simple pentagonal subdivision. Then we apply the conclusion to the pentagonal subdivision and double pentagonal subdivision introduced in \cite{wy1}.

\subsection{Criterion for Simple Pentagonal Subdivision}

The only pentagon in a simple pentagonal subdivision on an orientable surface is $P_1$. The converse is also true.

\begin{theorem}\label{sp_thm1}
A pentagonal tiling of an orientable surface is the simple pentagonal subdivision of a quadrilateral tiling if and only if it is possible to label some vertices by $\bullet$, such that the unlabelled vertices have degree $3$, and every tile is $P_1$ in Figure \ref{qconverse1}.
\end{theorem}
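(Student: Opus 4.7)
The plan is to prove the ``only if'' direction directly from the definitions, and to invert the subdivision construction for the ``if'' direction. For the ``only if'' direction, suppose $T$ is the simple pentagonal subdivision of a quadrilateral tiling $Q$. Label each vertex of $T$ that is a corner of $Q$ by $\bullet$. The remaining (unlabelled) vertices are midpoints of edges of $Q$; the defining requirement that each edge midpoint is used exactly once as an endpoint of a dividing line forces every midpoint to have degree $3$ (its two half-edges, plus the one dividing line that uses it). Each pentagonal tile of $T$ is a half of a quadrilateral of $Q$, hence one of $P_1, P_2, P_3$ in Figure \ref{qconverse1}. Since $P_2$ and $P_3$ each contain a M\"obius band, orientability rules these out and leaves only $P_1$.

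For the ``if'' direction, the plan is to reconstruct $Q$ by undoing the subdivision. The label pattern $\bullet, 3, \bullet, 3, 3$ cyclically around the boundary of $P_1$ implies each tile has a unique edge with both endpoints unlabelled; I call this the \emph{dividing edge} of the tile. I would delete every dividing edge from $T$, thereby merging each pair of $P_1$ tiles sharing a dividing edge into one face whose boundary is the $8$-cycle $\bullet, 3, \bullet, 3, \bullet, 3, \bullet, 3$ obtained by concatenating the two $\bullet, 3, \bullet, 3$ arcs on either side of the deleted edge. I would then smooth every unlabelled vertex by splicing its two remaining incident half-edges into a single edge, yielding a cell complex with only $\bullet$ vertices and $4$-gonal faces. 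This is the desired quadrilateral tiling $Q$, and reinserting midpoints on its edges together with the deleted dividing edges recovers $T$ exactly.

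The step that requires the most care is the local analysis at each unlabelled vertex $v$. At $v$ there are three edges and three incident tiles, and in each tile $v$ plays either the ``centre of the $\bullet, 3, \bullet$ arc'' role (contributing no dividing edge at $v$) or an ``endpoint of the dividing edge'' role (contributing one). Since every dividing edge at $v$ is shared by two of the three tiles at $v$, the number of dividing edges at $v$ is either $0$ or $1$. The main obstacle is ruling out the $0$ case: were it to occur, all three tiles at $v$ would place $v$ at the centre of their $\bullet, 3, \bullet$ arcs, forcing three distinct reconstructed edges of $Q$ to share $v$ as a common midpoint, which is incompatible with the cell-complex structure of a tiling. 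Once this is excluded, every unlabelled vertex has degree exactly $2$ after deletion, so the smoothing step is well defined, and the remaining checks — that each merged $8$-gon is a topological disk and that the procedure still succeeds when the reconstructed tile is degenerate (for example $Q_{13}$ or $Q_{13,24}$, allowed on orientable surfaces by Theorem \ref{torus}) — follow by boundary-tracing analogous to that used throughout Section \ref{quad}.
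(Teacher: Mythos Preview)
Your approach matches the paper's: both identify the unique edge of $P_1$ with two unlabelled endpoints (the paper calls it the ``dotted edge''), then merge the two pentagons across each such edge into a quadrilateral. Your ``only if'' direction is more explicit than the paper's (which relegates it to the remark just before the theorem), and you are right that the $0$-case at an unlabelled vertex is the one point requiring care.

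However, your argument excluding the $0$-case is circular. You write that three reconstructed edges of $Q$ sharing $v$ as a midpoint is ``incompatible with the cell-complex structure of a tiling,'' but $Q$ has not yet been constructed; all this shows is that your procedure would fail at such a $v$, not that such a $v$ cannot exist in the given pentagonal tiling. A non-circular fix is a counting argument. Each $P_1$ contributes three unlabelled vertices and one dividing edge; since unlabelled vertices have degree $3$ and tiles are non-degenerate, the number $|U|$ of unlabelled vertices equals the number $F$ of tiles. Each dividing edge is the dividing edge of both tiles containing it, so there are $F/2$ dividing edges and hence $F$ endpoint incidences. You have already shown each unlabelled vertex lies on at most one dividing edge (two would force a tile with both neighbours of $v$ unlabelled, impossible in $P_1$); since the incidences sum to $F=|U|$, every unlabelled vertex lies on exactly one. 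The paper's own proof does not spell this out either, instead relying on Figure~\ref{qconv_orient} and the propagation from $\circled{1},\circled{2}$ to $\circled{3},\circled{4}$ to indicate that the local picture is always as drawn.
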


\begin{proof}
There is a unique edge in $P_1$ such that both ends are unlabelled. We indicate this edge by a dotted line. We start with the tile labeled $\circled{1}$ in Figure \ref{qconv_orient}. The dotted edge is shared with another tile $\circled{2}$. By the conditions of the theorem, the $\bullet$-vertices of $\circled{2}$ must be as indicated in Figure \ref{qconv_orient}. The union of $\circled{1}$ and $\circled{2}$ is a quadrilateral.

\begin{figure}[h]
\centering
\begin{tikzpicture}[>=latex,scale=1]

\foreach \a in {-1,1}
{
\begin{scope}[yscale=\a]

\draw
	(-0.6,0) -- (-0.8,0.8) -- (0,1) -- (0.8,0.8) -- (0.6,0)
	(0.8,0.8) -- (1.6,0.6) -- (2.4,0.8) -- (2.6,0);

\fill
	(-0.8,0.8) circle (0.1)
	(0.8,0.8) circle (0.1)
	(2.4,0.8) circle (0.1);
	
\end{scope}
}

\draw[densely dotted]
	(-0.6,0) -- (0.6,0)
	(1.6,0.6) -- (1.6,-0.6);

\node[draw, shape=circle, inner sep=0.5] at (0,0.5) {\small $1$};
\node[draw, shape=circle, inner sep=0.5] at (0,-0.5) {\small $2$};
\node[draw, shape=circle, inner sep=0.5] at (1.1,0) {\small $3$};
\node[draw, shape=circle, inner sep=0.5] at (2.1,0) {\small $4$};

\end{tikzpicture}
\caption{When does a pentagonal tiling come from simple pentagonal subdivision?}
\label{qconv_orient}
\end{figure}
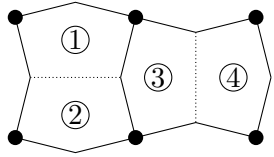

The two ends of the dotted edge are degree $3$ vertices shared by $\circled{1}$ and $\circled{2}$. Outside one such vertex is a tile $\circled{3}$. The $\bullet$-vertices of $\circled{3}$ are already given as vertices of $\circled{1}$ and $\circled{2}$. Using $\circled{3}$ in place of $\circled{1}$, we get another tile $\circled{4}$ similar to $\circled{2}$, and the union of $\circled{3}$ and $\circled{4}$ is another quadrilateral.

For each dotted edge, we get a quadrilateral by taking the union of two tiles on two sides of the edge. These quadrilaterals may be degenerate, but cannot have adjacent opposing edges identified, as indicated in Figure \ref{degenerate}. Therefore the quadrilaterals form a tiling. Moreover, Figure \ref{qconv_orient} shows that the pentagonal tiling is the simple pentagonal subdivision of this quadrilateral tiling.
\end{proof}

We note that the proof (of sufficiency) actually does not require the surface to be orientable. Therefore the sufficiency part of Theorem \ref{sp_thm1} is also valid on non-orientable surfaces. The following is the more general result taking into account also non-orientable tiles.

\begin{theorem}\label{sp_thm2}
A pentagonal tiling of a surface is the simple pentagonal subdivision of a quadrilateral tiling if and only if it is possible to label some vertices by $\bullet$, such that the unlabelled vertices have degree $3$, and the tiles are $P_1,P_2,P_3$ in Figure \ref{qconverse1}.
\end{theorem}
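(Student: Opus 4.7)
The plan is to reuse and extend the proof of Theorem \ref{sp_thm1}, with necessity handled by the construction of simple pentagonal subdivision and the enumeration in the list preceding the theorem.

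For necessity, suppose the pentagonal tiling arises as the simple pentagonal subdivision of a quadrilateral tiling $T$. I would label every vertex of $T$ by $\bullet$ and leave unlabelled the midpoint vertices introduced by the subdivision. Each midpoint vertex has degree exactly $3$, because only the two half-edges of the original edge of $T$ together with a single subdivision segment (entering from one of the two incident quadrilaterals) meet there. The enumerated list preceding the theorem shows that the halves of the thirteen quadrilateral types of Figure \ref{quad_degenerate} are precisely the pentagons $P_1, P_2, P_3$, so the tiles of the pentagonal tiling are of these three forms.

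For sufficiency, I would note that the proof of Theorem \ref{sp_thm1} already establishes the result when every tile is $P_1$, and as observed there, that argument never uses orientability. The task is therefore to incorporate tiles of type $P_2$ and $P_3$. In each of $P_1, P_2, P_3$ there is a canonical edge whose two endpoints are both unlabelled; I mark this the \emph{dotted edge}. At any unlabelled vertex, the degree-$3$ hypothesis together with the local picture inside each pentagon type shows that exactly one of the three incident edges is dotted and the other two are non-dotted and lie on opposite sides of the dotted one. Consequently each dotted edge is shared by exactly two pentagons (possibly a pentagon with itself, for certain degenerate $P_3$ configurations), and the union of the two sides across a dotted edge is a closed $2$-cell.

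I would then identify each such union with one of the thirteen subdivisible quadrilaterals of Figure \ref{quad_degenerate}, working case by case through the combinations of $P_1, P_2, P_3$ using the correspondence listed just before the theorem: two $P_1$ pieces yield $Q, Q_{13}, Q_{12,34}, Q_{13,24}$ or their relatives; unions involving $P_2$ or $P_3$ yield the remaining degenerate types. The pairing by dotted edges is an involution without fixed points on tiles, so these unions tile the surface. The forbidden identification of adjacent opposing edges (Figure \ref{degenerate}) cannot occur, since it would force an unlabelled vertex to have degree $1$. Applying simple pentagonal subdivision to the resulting quadrilateral tiling then recovers the original pentagonal tiling by construction.

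The main obstacle is the finite case analysis that checks every combination of $P_1, P_2, P_3$ across a dotted edge really does produce one of the thirteen permissible quadrilateral types, and that the resulting global combinatorics is a tiling in the sense fixed in the introduction. This is a diagram-driven verification guided by the list preceding the theorem and by the enumeration of boundary types in Table \ref{all}.
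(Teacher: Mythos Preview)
Your approach is essentially the same as the paper's: both identify the unique dotted edge in each $P_i$, glue the two pentagons across it into a quadrilateral, and then invoke the argument of Theorem~\ref{sp_thm1} to conclude that these quadrilaterals tile the surface; the paper leaves necessity implicit and exhibits only two sample unions ($P_1\cup P_2=Q_{12}$ and $P_2\cup P_2=Q_{12,34}$) in place of your fuller case discussion. One small inconsistency to clean up: your parenthetical allowing a $P_3$ to pair with itself contradicts your later assertion that the involution has no fixed points---in fact no fixed points occur, because the dotted edge has two unlabelled endpoints while every other edge of a $P_i$ carries a $\bullet$, so the dotted edge cannot be glued to another edge of the same pentagon.
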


In this theorem, we require $P_2$ to embed into the surface as a M\"obius band. 

\begin{proof}
In each of $P_1,P_2,P_3$, there is only one edge with both ends unlabelled. As in the proof of Theorem \ref{sp_thm1}, we indicate the edge by a dotted line. Every pentagon has one dotted line, and every dotted line is shared by two pentagons. By taking the union of two pentagons on two sides of a dotted line, we get a quadrilateral. For example, Figure \ref{qconverse2} shows that $P_1\cup P_2=Q_{12}$ and $P_2\cup P_2=Q_{12,34}$ (note smaller $\bullet$ and $\circ$ used for identified vertices in Section \ref{quad}). By further identifying vertices (for example, one $\bullet$-vertex of $P_1$ identified with the $\bullet$-vertex of $P_2$), $P_1\cup P_2=Q_{12}$ may be further reduced to $Q_{123}$ or $Q_{132}$. Similar reductions happen to other unions. Then the same argument as the proof of Theorem \ref{sp_thm1} shows that the quadrilaterals form a tiling of the surface (in particular, the forbidden identification in Figure \ref{degenerate} does not happen), and the pentagonal tiling is the simple pentagonal subdivision of the quadrilateral tiling.
\end{proof}

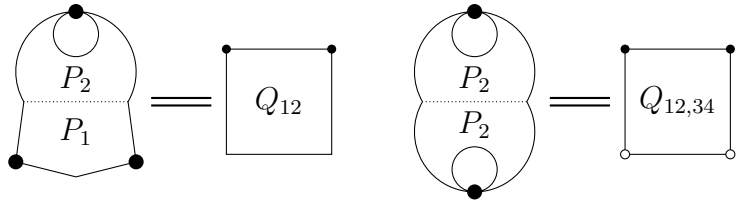
\begin{figure}[h]
\centering
\begin{tikzpicture}[>=latex,scale=1]


\begin{scope}[yshift=0.4cm]

\draw
	(0,0.5) circle (0.3)
	(-30:0.8) arc (-30:210:0.8)
	(-30:0.8) -- (0.8,-1.2) -- (0,-1.4) -- (-0.8,-1.2) -- (210:0.8);

\draw[densely dotted]
	(-30:0.8) -- (210:0.8);

\fill
	(90:0.8) circle (0.1)
	(0.8,-1.2) circle (0.1)
	(-0.8,-1.2) circle (0.1);

\node at (0,-0.1) {$P_2$};
\node at (0,-0.8) {$P_1$};		

\draw[thick]
	(1,-0.35) -- ++(0.8,0)
	(1,-0.45) -- ++(0.8,0);

\end{scope}

\begin{scope}[xshift=2.7cm]

\draw
	(-0.7,-0.7) rectangle (0.7,0.7);

\fill
	(0.7,0.7) circle (0.06)
	(-0.7,0.7) circle (0.06);
	
\node at (0,0) {$Q_{12}$};
	
\end{scope}


\foreach \a in {-1,1}
\draw[shift={(5.3cm, 0.4*\a cm)}, yscale=\a]
	(0,0.5) circle (0.3)
	(-30:0.8) arc (-30:210:0.8);

\begin{scope}[shift={(5.3cm,0.4cm)}]

\draw[densely dotted]
	(-30:0.8) -- (210:0.8);

\fill
	(90:0.8) circle (0.1)
	(-90:1.6) circle (0.1);

\node at (0,-0.1) {$P_2$};
\node at (0,-0.7) {$P_2$};		

\draw[thick]
	(1,-0.35) -- ++(0.8,0)
	(1,-0.45) -- ++(0.8,0);

\end{scope}

\begin{scope}[xshift=8cm]

\draw
	(-0.7,-0.7) rectangle (0.7,0.7);

\fill
	(0.7,0.7) circle (0.06)
	(-0.7,0.7) circle (0.06);

\filldraw[fill=white]
	(0.7,-0.7) circle (0.06)
	(-0.7,-0.7) circle (0.06);
		
\node at (0,0) {$Q_{12,34}$};
	
\end{scope}

\end{tikzpicture}
\caption{Union of two pentagons is one quadrilateral.}
\label{qconverse2}
\end{figure}

\subsection{Criterion for Pentagonal Subdivision}

Pentagonal subdivision is introduced for tilings of oriented surfaces in \cite{wy1}. Specifically, let $T$ be a tiling of an oriented surface, with possibly degenerate tiles. We divide each edge $e$ in $T$ into three segments. If $e$ is one edge of a tile $t$, we may use the orientation to label the two dividing points on $e$ as the first and the second, from the viewpoint of $t$. Note that $e$ is shared by $t$ and another tile $t'$. The labelling of $e$ from the viewpoint of $t'$ is different from the viewpoint of $t$. Then we fix a center point of $t$, and connect it to the first dividing points of the edges of $t$. We obtain the {\em pentagonal subdivision} $T(5)$ by doing this for all the pairs $(t,e)$ in the tiling. See Figure \ref{5div}.

\begin{figure}[htp]
\centering
\begin{tikzpicture}[>=latex]

\foreach \a in {0,1}
\draw[xshift=4.8*\a cm]
	(-1.5,-1.5) rectangle (0,0)
	(0,0) -- (1.5,-0.6) -- (0,-1.5)
	(0,0) -- (-0.6,1.5) -- (-1.5,0)
	(1.5,-0.6) -- (1.5,0.6) -- (0.6,1.5) -- (-0.6,1.5)
	;

	
\draw[->]
	(-0.95,-0.75) arc (-180:90:0.2);
\draw[->]
	(-0.9,0.5) arc (-180:90:0.2);
\draw[->]
	(0.3,-0.7) arc (-180:90:0.2);
\draw[->]
	(0.4,0.6) arc (-180:90:0.2);
	
\fill
	(0.5,-0.2) circle (0.04)
	(-0.2,0.5) circle (0.04)
	(1,-0.4) circle (0.04)
	(-0.4,1) circle (0.04)
	(1.5,0.2) circle (0.04)
	(0.2,1.5) circle (0.04)
	(1.5,-0.2) circle (0.04)
	(-0.2,1.5) circle (0.04)
	(0.9,1.2) circle (0.04)
	(1.2,0.9) circle (0.04);

\node at (0.5,0.8) {$t$};
\node at (-0.8,0.7) {$t'$};

\node at (0.55,-0.05) {\scriptsize 1};
\node at (0.45,-0.35) {\scriptsize 2};
\node at (1.05,-0.25) {\scriptsize 2};
\node at (0.95,-0.55) {\scriptsize 1};

\node at (-0.05,0.55) {\scriptsize 2};
\node at (-0.35,0.45) {\scriptsize 1};
\node at (-0.25,1.05) {\scriptsize 1};
\node at (-0.55,0.95) {\scriptsize 2};

\node at (1.35,0.2) {\scriptsize 2};
\node at (1.35,-0.2) {\scriptsize 1};

\node at (0.2,1.35) {\scriptsize 1};
\node at (-0.2,1.35) {\scriptsize 2};

\node at (0.8,1.1) {\scriptsize 2};
\node at (1.1,0.8) {\scriptsize 1};
			
\draw[very thick, ->]
	(2,0) -- ++(0.8,0);

\node at (1.2,-1.3) {$T$};


\begin{scope}[xshift=4.8cm]

\draw
	(0,-1) -- (-1.5,-0.5)
	(-0.5,0) -- (-1,-1.5)
	(0.5,-0.7) -- (0,-0.5) 
	(0.5,-0.7) -- (1,-0.4)
	(0.5,-0.7) -- (0.5,-1.2)
	(-0.7,0.5) -- (-1,0) 
	(-0.7,0.5) -- (-0.2,0.5)
	(-0.7,0.5) -- (-0.9,1)
	(0.55,0.55) -- (0.5,-0.2)
	(0.55,0.55) -- (-0.4,1)
	(0.55,0.55) -- (1.5,-0.2)
	(0.55,0.55) -- (0.2,1.5)
	(0.55,0.55) -- (1.2,0.9)
	;

\fill
	(0,0) circle (0.1)
	(0,-1.5) circle (0.1)
	(-1.5,0) circle (0.1)
	(-1.5,-1.5) circle (0.1)
	(-0.6,1.5) circle (0.1)
	(0.6,1.5) circle (0.1)
	(1.5,0.6) circle (0.1)
	(1.5,-0.6) circle (0.1);	

\filldraw[fill=white]
	(-0.7,0.5) circle (0.1)
	(-0.75,-0.75) circle (0.1)
	(0.5,-0.7) circle (0.1)
	(0.55,0.55) circle (0.1);

\node at (1.2,-1.3) {$T(5)$};
	
\end{scope}


\begin{scope}[xshift=8.5cm]

\foreach \a in {0,...,4}
\draw[rotate=72*\a]
	(18:0.8) -- (90:0.8);

\fill
	(18:0.8) circle (0.1);
\filldraw[fill=white]
	(162:0.8) circle (0.1);
	
\draw
	(0,0) circle (0.2);
	
\draw[<-]
	(-0.1,0.195) -- (0.05,0.195);

\node at (0,-1.2) {tile in $T(5)$};

\node at (90:0.95) {\small 3};
\node at (-54:0.95) {\small 3};
\node at (234:0.95) {\small 3};
	
\end{scope}

\end{tikzpicture}
\caption{Pentagonal subdivision.}
\label{5div}
\end{figure}
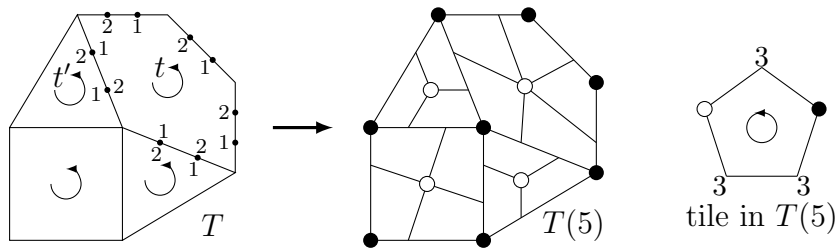

Every tile in a pentagonal subdivision $T(5)$ is non-degenerate and the vertices have the pattern $\bullet-3-\circ-3-3$ along the orientation direction. Here $\bullet$ means a vertex of $T$, and $\circ$ means the center of a tile in $T$, and $3$ means a vertex of degree $3$ (the dividing points of edges in $T$). This property actually characterises pentagonal subdivision. 

\begin{theorem}\label{ps_thm1}
A pentagonal tiling of an oriented surface is the pentagonal subdivision of a tiling, if and only if all tiles are non-degenerate, and it is possible to label some vertices by $\bullet$ and some other vertices by $\circ$, such that vertices of each tile have the pattern $\bullet-3-\circ-3-3$ along the orientation of the surface.  
\end{theorem}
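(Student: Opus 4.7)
The plan has two directions. For necessity, I apply the construction of pentagonal subdivision recalled just before the theorem: label the original vertices of $T$ as $\bullet$, the chosen centre of each tile as $\circ$, and observe that every dividing point on an edge of $T$ is automatically a degree-$3$ vertex, incident to two halves of the original edge and one new edge to the centre of the tile for which it is the ``first'' dividing point. The cyclic pattern $\bullet-3-\circ-3-3$ along the orientation is exactly the boundary sequence of the corner pentagon in Figure \ref{5div}, so the ``only if'' direction is immediate once non-degeneracy is taken as part of the hypothesis.

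For sufficiency, the first and most delicate step is a local rigidity statement: at every degree-$3$ vertex $v$ the three incident edges go to one $\bullet$, one $\circ$, and one $3$, in the cyclic order determined by the surface orientation. There are three pentagons meeting at $v$, and in each of them $v$ plays one of the three ``$3$''-roles in the cyclic pattern $\bullet-3-\circ-3-3$; each role fixes the ordered pair of neighbour-types along the two edges of that pentagon at $v$ (one of $\{\bullet,\circ\}$, $\{\circ,3\}$, $\{3,\bullet\}$). Enforcing the compatibility ``the neighbour across each shared edge is the same from both sides'' then forces the three pentagons to take the three distinct roles in the cyclic order induced by the orientation, which identifies the three incident edges of $v$ as going to $\bullet$, $\circ$, $3$ respectively.

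Granted this, I construct $T$ as follows. Its vertex set is the $\bullet$-vertices. For each $3$-$3$ edge of the pentagonal tiling, extending through the now-unique $\bullet$-edge at each of its two endpoints gives a canonical path $\bullet-3-3-\bullet$, which I declare to be an edge of $T$. For each $\circ$-vertex $c$ of pentagonal-tiling degree $d$, the tile of $T$ centred at $c$ is the union of the $d$ pentagons at $c$. Since every pentagon has exactly one $\circ$-vertex, these unions partition the surface into open disks; the $d$ pentagons at $c$ glue cyclically along the $3$-$\circ$ edges at $c$, producing a ``flower'' whose outer boundary cycles through $d$ copies of the sub-pattern $\bullet-3-3$ and hence forms a $d$-gon whose $d$ sides are precisely the newly defined edges of $T$. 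The hypothesis $\deg\ge 3$ at every vertex of the pentagonal tiling yields $d\ge 3$, so $T$ is a valid tiling. Finally, applying the pentagonal-subdivision operation to this $T$, placing the centre of each tile at the corresponding $\circ$ and choosing the ``first'' dividing point on each edge of $T$ to be the $3$-vertex adjacent to that $\circ$, recovers the original pentagonal tiling, because the cyclic sequence of the pentagon constructed for each corner matches the assigned pattern $\bullet-3-\circ-3-3$.

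The hardest step will be the local neighbour-type constraint at degree-$3$ vertices: the combinatorial case analysis ($3$ possible roles per pentagon, $3$ pentagons at each vertex, and one compatibility condition across each of the $3$ incident edges) is what rigidifies the local picture and makes the $\bullet/\circ/3$-classification of edges globally well-defined. Once that is settled, the canonical extension $3$-$3 \mapsto \bullet$-$3$-$3$-$\bullet$ defining the edges of $T$, the flower-disk decomposition producing the tiles of $T$, and the verification that $T(5)$ equals the original pentagonal tiling all follow by essentially forced bookkeeping.
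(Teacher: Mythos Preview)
Your proof is correct, but it follows a genuinely different route from the paper's.

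The paper's argument for sufficiency is a two-line reduction: forget the distinction between $\bullet$ and $\circ$, so that every tile is the pentagon $P_1$ of Figure~\ref{qconverse1}; then invoke Theorem~\ref{sp_thm1} to recognise the pentagonal tiling as the simple pentagonal subdivision of a quadrilateral tiling. In that quadrilateral tiling the four corners alternate $\bullet,\circ,\bullet,\circ$, and the tile of $T$ centred at a $\circ$-vertex is obtained by combining the pentagons around it, exactly as in your flower construction (Figure~\ref{proof1}).

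You bypass Theorem~\ref{sp_thm1} entirely. Your local rigidity step---that the three edges at any degree-$3$ vertex go to $\bullet$, $\circ$, and another $3$, in a cyclic order fixed by the orientation---is the substitute: it does by hand, in this oriented and $\bullet/\circ$-refined setting, what the proof of Theorem~\ref{sp_thm1} does in the unrefined setting. The role analysis (positions $1,3,4$ forced to cycle because next-type of one role equals previous-type of the next) is exactly right and is what makes your direct construction of the edges $\bullet\text{--}3\text{--}3\text{--}\bullet$ well-defined. After that, your assembly of $T$ coincides with the paper's.

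What each approach buys: the paper's route is shorter and modular, reusing an already-proved criterion. Yours is self-contained and exposes more of the local combinatorics; in particular, your edge-type classification at degree-$3$ vertices is a sharper statement than anything the paper writes down explicitly, and it makes the reconstruction of $T$ transparently canonical. Both arguments leave the same point implicit, namely that the open union of pentagons around a $\circ$-vertex is a disk; this is routine given non-degeneracy of the pentagons, and the paper does not spell it out either.
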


We remark that a tiling $T$ and its dual $T^*$ (using the opposite of the orientation for $T$) induce the same pentagonal subdivision, with $\circ$- and $\bullet$-vertices exchanged. 

\begin{proof}
If we do not distinguish $\bullet$ and $\circ$, then we may apply Theorem \ref{sp_thm1} to find that the pentagonal tiling is a simple pentagonal subdivision of a quadrilateral tiling. Then we consider the quadrilateral tiles around a $\circ$-vertex, as in Figure \ref{proof1}. By the assumption on pentagonal tiles, we have a simple pentagonal subdivision of the neighborhood compatible with the orientation at the $\circ$-vertex. Then we combine the pentagons around the $\circ$-vertex to form a polygon (enclosed by the dashed lines). The vertices of this polygon are $\bullet$-vertices, and the edges of this polygon are $\bullet-1\cdots 2-\bullet$. We have one polygon for each $\circ$-vertex, and the polygons form a tiling of the surface. The pentagonal tiling is the pentagonal subdivision of this tiling. 
\end{proof}

\begin{figure}[h]
\centering
\begin{tikzpicture}[>=latex,scale=1]


\foreach \a in {0,...,3}
{
\begin{scope}[rotate=90*\a]

\draw
	(0,0) rectangle (1,1);

\draw[densely dotted]
	(0.5,0) -- (0.5,1);

\draw[dashed]
	(1.07,0.07) -- (0.57,0.07) -- (0.57,1.07) -- (-0.07,1.07);

\fill
	(1,0) circle (0.1);

\filldraw[fill=white]
	(1,1) circle (0.1);

\node at (0.5,-0.15) {\scriptsize 1};
\node at (0.6,1.2) {\scriptsize 2};
		
\end{scope}
}
	
\filldraw[fill=white]
	(0,0) circle (0.1);

\end{tikzpicture}
\caption{Construction of pentagonal subdivision.}
\label{proof1}
\end{figure}
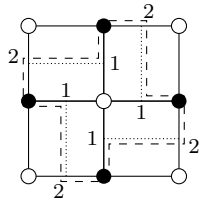

If we start with a tiling in which all vertices have degree $3$, then each tile in the pentagonal subdivision has one $\circ$-vertex, and its other four vertices (including $\bullet$-vertex) have degree $3$. Dually, if we start with a triangular tiling, then each tile in the pentagonal subdivision has one $\bullet$-vertex, and its other four vertices have degree $3$. The converse is also true.

\begin{theorem}\label{ps_thm2}
Suppose all tiles in a pentagonal tiling of a surface are non-degenerate, and it is possible to label some vertices by $\circ$, such that each tile has exactly one $\circ$-vertex, and the other four vertices have degree $3$. Then the surface is orientable, and the tiling is obtained by pentagonal subdivision.
\end{theorem}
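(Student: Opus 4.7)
The plan is to reduce to Theorem~\ref{ps_thm1} by producing a $\bullet$-labelling and an orientation. In each tile $t$ with $\circ$-vertex $v$, the only candidates for a $\bullet$-vertex in the pattern $\bullet-3-\circ-3-3$ are the two vertices at distance $2$ from $v$ along the pentagon boundary, which I call the far vertices of $t$. Choosing one of the two far vertices is equivalent to choosing one of the two orientations of $t$, so producing a consistent global $\bullet$-labelling amounts to orienting every tile in a mutually compatible way.

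First I would orient each $\circ$-disk. Around a $\circ$-vertex $v$ of degree $d$, the $d$ tiles containing $v$ form a disk, and an orientation of this disk induces a $\bullet$-choice on each of those $d$ tiles; compatibility across the interior edges at $v$ is automatic, since any two neighbouring tiles around $v$ share an edge incident to $v$ on which the disk orientation induces opposite directions, forcing the correct far vertex to be labelled in each. Next I would show that the orientations of different $\circ$-disks can be chosen to agree on their shared boundary edges. Such a boundary edge has endpoints at non-$\circ$ vertices of degree $3$, and at each such vertex $w$ exactly three tiles meet; the local hypothesis---one $\circ$-vertex per tile, and the other four of degree $3$---tightly constrains the positions of $w$ (far, or neighbour of $\circ$) in each of the three incident tiles. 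A direct case analysis at $w$ shows that the $\bullet$-choice forced by the orientation of one disk matches the $\bullet$-choice forced by the neighbouring disk precisely when the two disk orientations are compatible. Propagating these forced choices around an arbitrary closed loop and verifying that the return matches the start then simultaneously produces a global orientation (so the surface is orientable) and a globally consistent $\bullet$-labelling.

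With $\bullet$- and $\circ$-vertices labelled and the pattern $\bullet-3-\circ-3-3$ reading along the resulting global orientation in every tile, Theorem~\ref{ps_thm1} identifies the tiling as $T(5)$ for a suitable tiling $T$. The main obstacle is the propagation/consistency step: the case analysis at a degree-$3$ vertex must be sharp enough that no closed loop in the tiling can force a conflicting $\bullet$-choice on return, which is precisely the content of ruling out orientation-reversing loops.
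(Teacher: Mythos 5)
Your overall strategy --- convert the one-$\circ$-per-tile hypothesis into a choice of $\bullet$-vertex, hence an orientation, for each tile, check compatibility, and then invoke Theorem \ref{ps_thm1} --- is exactly the strategy of the paper. But the proposal defers precisely the two steps that carry the mathematical content, and you say so yourself: the ``direct case analysis at $w$'' is asserted rather than performed, and the consistency of the propagation around closed loops is flagged as ``the main obstacle'' without any mechanism for resolving it. As written, this is a reduction of the theorem to an unproved local lemma plus an unproved monodromy statement, not a proof.

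The paper closes both gaps with one observation that your plan is missing: the given $\circ$-labelling already \emph{forces} everything, so there are no free disk-orientations to reconcile and no holonomy to check. Concretely (Figure \ref{proof3}): around a $\circ$-vertex take consecutive tiles $\circled{1},\circled{2},\circled{3}$; the third tile $\circled{4}$ at the degree-$3$ vertex between $\circled{1}$ and $\circled{2}$ has three of its vertices already known to be non-$\circ$, so its $\circ$ sits at one of two remaining positions (fixed once and for all by a reflection). The tile $\circled{6}$ beyond $\circled{4}$ contains that same $\circ$-vertex as its unique $\circ$, so its remaining four vertices are degree-$3$ non-$\circ$; this supplies four non-$\circ$ vertices of the next second-ring tile $\circled{5}$ and hence pins down the $\circ$ of $\circled{5}$, together with the shared $\bullet$-vertices (the unique shared vertex not adjacent to either tile's $\circ$) and hence the orientations of $\circled{1},\circled{2},\circled{4},\circled{5},\circled{6}$. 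Because each step is a consequence of the given $\circ$-data rather than a propagated choice, compatibility of orientations of adjacent tiles is a purely local verification, orientability follows without any loop argument, and the self-similarity of the configuration lets the forced pattern cover the whole tiling. To complete your proposal you should replace ``orient each $\circ$-disk and then match the disks'' by this forcing chain; otherwise the consistency-around-loops step you identify as the main obstacle remains exactly that.
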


\begin{proof}
Figure \ref{proof3} shows three consecutive tiles $\circled{1},\circled{2},\circled{3}$ around a $\circ$-vertex. Since all the vertices of $\circled{1},\circled{2},\circled{3}$ other than the central $\circ$-vertex have degree $3$, there is a tile $\circled{4}$ outside $\circled{1},\circled{2}$, a tile $\circled{5}$ outside  $\circled{2},\circled{3}$, and a tile $\circled{6}$ bordering $\circled{2},\circled{4},\circled{5}$.

We already know that the three vertices of $\circled{4}$ shared with $\circled{1},\circled{2}$ are not $\circ$-vertices. By the assumption, among the two remaining vertices of $\circled{4}$, one is $\circ$ and the other has degree $3$. Without loss of generality, we assume the $\circ$-vertex is as indicated. Then there is unique way to assign the $\bullet$-vertex shared by $\circled{1},\circled{4}$, such that $\bullet$ and $\circ$ are not adjacent in $\circled{1}$ and $\circled{4}$. We note that $\circled{1},\circled{4}$ have the same orientation following the $\bullet-3-\circ-3-3$ direction.

\begin{figure}[h]
\centering
\begin{tikzpicture}[>=latex,scale=1]
	
\foreach \a in {-1,0,1}
\draw[rotate=90*\a]
	(0,0) -- (0.6,0) -- (1,0.5) -- (0.5,1) -- (0,0.6) -- (0,0);

\foreach \a in {0,1}
\draw[rotate=90*\a]
	(1,0.5) -- (1.5,0.5)
	(1,-0.5) -- (1.5,-0.5);
	
\draw
	(1.5,-0.5) -- (1.5,1.5) -- (-0.5,1.5);

\fill
	(0.5,1) circle (0.1)
	(1,-0.5) circle (0.1);

\filldraw[fill=white]
	(0,0) circle (0.1)
	(1.5,0.5) circle (0.1)
	(-0.5,1.5) circle (0.1);

\node[draw, shape=circle, inner sep=0.5] at (-45:0.6) {\small $1$};
\node[draw, shape=circle, inner sep=0.5] at (45:0.6) {\small $2$};
\node[draw, shape=circle, inner sep=0.5] at (135:0.6) {\small $3$};
\node[draw, shape=circle, inner sep=0.5] at (1.1,0) {\small $4$};
\node[draw, shape=circle, inner sep=0.5] at (0,1.1) {\small $5$};
\node[draw, shape=circle, inner sep=0.5] at (1.1,1.1) {\small $6$};

\end{tikzpicture}
\caption{Every pentagon has one $\circ$ and four degree $3$ vertices.}
\label{proof3}
\end{figure}
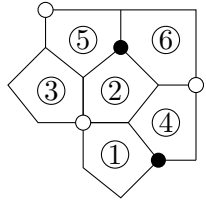

The $\circ$-vertex of $\circled{4}$ is also the unique $\circ$-vertex of $\circled{6}$. Therefore the other four vertices of $\circled{6}$ have degree $3$. Then we already know four non $\circ$-vertices for $\circled{5}$. This determines the $\circ$-vertex of $\circled{5}$ as indicated. We can assign the $\bullet$-vertex shared by $\circled{2},\circled{5},\circled{6}$, as before. Then $\circled{1},\circled{2},\circled{4},\circled{5},\circled{6}$ have the same orientation following the $\bullet-3-\circ-3-3$ direction. 

We started from the central (first) $\circ$-vertex, made assumption (by symmetry) on the second $\circ$-vertex, and then derived the third $\circ$-vertex. Since the position of the third $\circ$-vertex with respect to the first $\circ$-vertex is the same as the position of the second $\circ$-vertex with respect to the first $\circ$-vertex, and is also the same as the position of the first $\circ$-vertex with respect to the second $\circ$-vertex, the argument can continue in two directions and eventually covers the whole tiling. In particular, we introduce compatible  orientations of tiles following the $\bullet-3-\circ-3-3$ direction. This makes the surface orientable. Moreover, we also assign all the $\bullet$-vertices. Then we may apply Theorem \ref{ps_thm1} to show that the pentagonal tiling is a pentagonal subdivision.
\end{proof}

\subsection{Criterion for Double Pentagonal Subdivision}
\label{double}

Double pentagonal subdivision is also introduced for any tiling of oriented surface in \cite{wy1}. This is actually a combination of two subdivisions. First we apply {\em quadrilateral subdivision} given by Figure \ref{4div}. Then we apply simple pentagonal subdivision to the quadrilateral subdivision.

The quadrilateral subdivision $T(4)$ takes any tiling $T$, select a middle point of each edge, select a center point of each tile, and then connect the center point of any tile to the middle point of each boundary edge of the tile. It is the union of $T$ with its dual $T^*$. Unlike pentagonal subdivision, we do not need orientation to construct quadrilateral subdivision.

\begin{figure}[htp]
\centering
\begin{tikzpicture}[>=latex]

\foreach \a in {0,1}
\draw[xshift=4.4*\a cm]
	(-1.4,-1.4) rectangle (0,0)
	(0,0) -- (1.4,-0.6) -- (0,-1.4)
	(0,0) -- (-0.6,1.4) -- (-1.4,0)
	(1.4,-0.6) -- (1.4,0.6) -- (0.6,1.4) -- (-0.6,1.4)
	;
			
\draw[very thick, ->]
	(1.8,0) -- ++(0.8,0);

\node at (1.1,-1.3) {$T$};


\begin{scope}[xshift=4.4cm]

\draw
	(-1.4,-0.7) -- (0.5,-0.7) -- (0.7,-0.3) -- (0.5,0.5)
	(0.5,-0.7) -- (0.7,-1)
	(-0.7,-1.4) -- (-0.7,0.5) -- (-0.3,0.7) -- (0.5,0.5)
	(-0.7,0.5) -- (-1,0.7)
	(0,1.4) -- (0.5,0.5) -- (1.4,0)
	(1,1) -- (0.5,0.5)
	;

\fill
	(0,0) circle (0.1)
	(0,-1.4) circle (0.1)
	(-1.4,0) circle (0.1)
	(-1.4,-1.4) circle (0.1)
	(-0.6,1.4) circle (0.1)
	(0.6,1.4) circle (0.1)
	(1.4,0.6) circle (0.1)
	(1.4,-0.6) circle (0.1);	

\filldraw[fill=white]
	(-0.7,0.5) circle (0.1)
	(-0.7,-0.7) circle (0.1)
	(0.5,-0.7) circle (0.1)
	(0.5,0.5) circle (0.1);

\node at (1.1,-1.3) {$T(4)$};
	
\end{scope}

\end{tikzpicture}
\caption{Quadrilateral subdivision.}
\label{4div}
\end{figure}
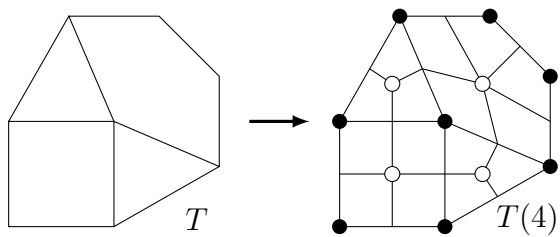

There are two types of tiles in a quadrilateral subdivision. The first is the quadrilateral $Q$ given by the first of Figure \ref{4div_tile}. It is non-degenerate, with a $\bullet$-vertex (from the original tiling $T$), a $\circ$-vertex (from the center of a tile in $T$), and two vertices of degree $4$, such that the $\bullet$-vertex and $\circ$-vertex are not adjacent. The second quadrilateral $Q'$ is degenerate, obtained by identifying the two degree $4$ vertices in the first of Figure \ref{4div_tile} in twisted way, and therefore gives a M\"obius band. This happens when two adjacent twisted edges of a tile in $T$ are identified as in the second of Figure \ref{4div_tile} (also see Figure \ref{degenerate}). We denote the quadrilateral by $Q'$. This forces two middle points labeled by small $\bullet$ to be identified, and the identified vertex has degree $4$. Together with the two nearby quadrilaterals $\circled{1}$ and $\circled{2}$, the three quadrilaterals form the M\"obius band in the third of Figure \ref{4div_tile}. We note that more identifications along the boundary of the M\"obius band may happen when vertices or edges of $\circled{1}$ and $\circled{2}$ are identified.

\begin{figure}[htp]
\centering
\begin{tikzpicture}[>=latex]


\draw
	(-0.6,-0.6) rectangle (0.6,0.6);

\fill
	(0.6,0.6) circle (0.1);
\filldraw[fill=white]
	(-0.6,-0.6) circle (0.1);

\node at (0.7,-0.7) {\small 4};
\node at (-0.7,0.7) {\small 4};

\node at (0,0) {$Q$};


\begin{scope}[xshift=3.3cm]

\fill[gray!30]	
	(0,0.8) -- (-0.6,0.5) -- (0,-0.2) -- (0.6,0.5) -- cycle;

\draw
	(0,0.8) -- (1.2,0.2) -- (0.8,-0.8)
	(0,0.8) -- (-1.2,0.2) -- (-0.8,-0.8)
	(-0.6,0.5) -- (0,-0.2) -- (0.6,0.5)
	(-1,-0.3) -- (0,-0.2) -- (1,-0.3);

\fill
	(0,0.8) circle (0.1)
	(1.2,0.2) circle (0.1)
	(-1.2,0.2) circle (0.1);
\filldraw[fill=white]
	(0,-0.2) circle (0.1);

\fill
	(0.6,0.5) circle (0.04)
	(-0.6,0.5) circle (0.04);

\filldraw[fill=white]
	(1,-0.3) circle (0.04)
	(-1,-0.3) circle (0.04);
		
\foreach \a in {-1,1}
{
\draw[->]
	(0.3*\a,0.65) -- ++(0.1,-0.05*\a);
\draw[->]
	(0.9*\a,0.35) -- ++(0.1,-0.05*\a);
}

\node at (0.4,0.8) {\scriptsize $a_1$};
\node at (-0.4,0.8) {\scriptsize $a_0$};
\node at (1,0.48) {\scriptsize $a_0$};
\node at (-0.9,0.5) {\scriptsize $a_1$};

\node[draw, shape=circle, inner sep=0.5] at (0.7,0.1) {\small $1$};
\node[draw, shape=circle, inner sep=0.5] at (-0.7,0.1) {\small $2$};

\node at (0,0.3) {$Q'$};

\end{scope}


\begin{scope}[xshift=7cm] 

\fill[gray!30]
	(-1.2,0.6) to[out=-80,in=180] 
	(0.0,-0.6) to[out=-0,in=-90] 
	(0.9,0) to[out=-40,in=0]
	(0,-0.8) to[out=180,in=-10] 
	(-1.2,-0.6)
	(-1.2,0.6) to[out=-20,in=90] 
	(0.9,0) to[out=140,in=50] 
	(-0.95,-0.05) to[out=230,in=70] 
	(-1.2,-0.6);

\draw
	(-1.2,0.6) to[out=30,in=90] 
	(1.4,0) to[out=-90,in=-30] 
	(-1.2,-0.6)
	(-1.2,0.6) to[out=-50,in=-90] 
	(0.4,0) to[out=90,in=30] 
	(-0.5,0)
	(-1.2,0.6) to[out=-80,in=180] 
	(0.0,-0.6) to[out=-0,in=-90] 
	(0.9,0) to[out=90,in=-20] 
	(-1.2,0.6)
	(0.9,0) to[out=-40,in=0]
	(0,-0.8) to[out=180,in=-10] 
	(-1.2,-0.6)
	(0.9,0) to[out=140,in=50] 
	(-0.95,-0.05);	

\draw[dashed]
	(-0.5,0) to[out=210,in=50] (-1.2,-0.6)
	(-1.2,-0.6) -- (-1.2,0.6);	

\draw[dashed]
	(-0.95,-0.05) to[out=230,in=70] 
	(-1.2,-0.6);
				
\fill
	(-1.2,0.6) circle (0.1);
\filldraw[fill=white]
	(-1.2,-0.6) circle (0.1);

\draw[->]
	(0.0,-0.6) -- ++(0.1,0);
\draw[->]
	(0.0,0.51) -- ++(-0.1,0);

\fill
	(0.9,0) circle (0.04);

\filldraw[fill=white]
	(0.4,0) circle (0.04)
	(1.4,0) circle (0.04);
		
\node at (0.0,-0.45) {\scriptsize $a_1$};
\node at (0.0,0.65) {\scriptsize $a_0$};

\node[draw, shape=circle, inner sep=0.5] at (1.1,0.3) {\small $1$};
\node[draw, shape=circle, inner sep=0.5] at (0.6,-0.2) {\small $2$};

\end{scope}

\end{tikzpicture}
\caption{Tiles in quadrilateral subdivision.}
\label{4div_tile}
\end{figure}
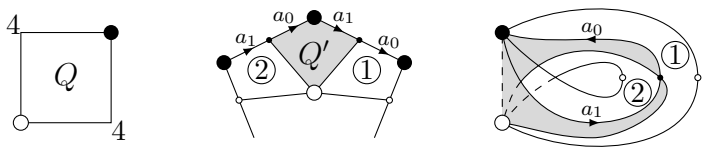

The following is the analogue of Theorem \ref{ps_thm1}.

\begin{theorem}\label{dps_thm1}
A quadrilateral tiling of a surface is the quadrilateral subdivision of a tiling, if and only if it is possible to label some vertices by $\bullet$ and some other vertices by $\circ$, such that every tile is $Q$ or $Q'$. 
\end{theorem}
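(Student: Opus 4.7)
The plan is to handle the two directions separately. Necessity is immediate from the construction of $T(4)$ in Figure \ref{4div} combined with the preceding case analysis: each tile of a quadrilateral subdivision contains one $\bullet$-vertex (a vertex of $T$), one $\circ$-vertex (a tile center of $T$) diagonally opposite, and two midpoint vertices of degree $4$; this gives $Q$ generically, and gives $Q'$ precisely when a tile of $T$ has two adjacent twisted edges identified, as shown in the second and third pictures of Figure \ref{4div_tile}.

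For sufficiency, I would invert the subdivision. Given the labeled quadrilateral tiling, I take the $\bullet$-vertices as the vertices of a new tiling $T$, and take the tile of $T$ associated to a $\circ$-vertex $v$ to be the union of all quadrilateral tiles incident to $v$. Edges of $T$ pass through the degree-$4$ vertices: four quadrilateral corners meet at any such vertex, and they pair up into two $\bullet$-to-$\bullet$ segments that become edges of $T$; when the degree-$4$ vertex is the self-identification point of a $Q'$, the two segments inside $Q'$ concatenate into a single edge of $T$ carrying a twisted self-identification at its $\bullet$-end, exactly reproducing the degeneracy of Figure \ref{4div_tile}.

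The key things to verify are: (a) the union of quadrilaterals around a $\circ$-vertex is a disk (possibly with boundary identifications) whose boundary is a cycle of edges of $T$, so that it is a legitimate tile in the paper's sense; (b) each edge of $T$ either borders two tiles of $T$ or borders one tile with a self-identification, so the forbidden configuration of Figure \ref{degenerate} does not arise; (c) the quadrilateral subdivision of the reconstructed $T$ is the original tiling. Step (a) uses the diagonal placement of $\bullet$ and $\circ$ in both $Q$ and $Q'$: each quadrilateral around a $\circ$-vertex contributes a single boundary arc of the form $\bullet$--(deg-$4$)--$\bullet$, and these arcs assemble cyclically by following the $\bullet$-vertices shared between consecutive quadrilaterals (in the spirit of Figure \ref{proof1}). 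Steps (b) and (c) are then local bookkeeping guided by the same pictures.

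The main obstacle will be the $Q'$ case: one must verify that the twisted self-identification inside a $Q'$-tile gives precisely the identification of two adjacent twisted edges of a tile of $T$, so that the inverse construction really is the inverse of $T(4)$ rather than producing some spurious combinatorial object. In particular, I need to check that the $\bullet$-vertex lying on the twisted axis of $Q'$ becomes the endpoint of the self-identified edge of $T$, while the identified pair of degree-$4$ vertices becomes its midpoint, matching the second and third pictures of Figure \ref{4div_tile}. Once this local matching is established, the argument essentially parallels the proof of Theorem \ref{sp_thm1}.
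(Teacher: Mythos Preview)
Your approach is essentially the same as the paper's: reconstruct $T$ by taking, for each $\circ$-vertex, the union of the quadrilateral tiles incident to it, and check that these unions form a legitimate tiling whose quadrilateral subdivision recovers the original.

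The one place where the paper's execution differs from your outline is the handling of $Q'$. You frame it as analyzing a single $Q'$ tile and checking that its internal twisted identification produces the correct self-identified edge of $T$. The paper instead works locally around a $\circ$-vertex: it lays out consecutive tiles $\circled{1},\circled{2},\circled{3}$ and, when $\circled{2}$ is $Q'$, uses the fact that the identified vertex has degree exactly $4$ to force $\circled{2}$ to coincide with the tile $\circled{2'}$ on the far side, with edge $x$ glued to $u$ and $y$ to $v$, so that $\circled{3}$ coincides with $\circled{3'}$. This pins down the local picture as precisely the one in Figure \ref{4div_tile} and in particular rules out the forbidden identification of Figure \ref{degenerate}. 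Your outline would need this step (or something equivalent) to be made explicit; as written, your ``local bookkeeping'' for step (b) hides the only nontrivial verification in the proof. Once you supply that, your argument and the paper's are the same.
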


\begin{proof}
Figure \ref{proof2} shows three consecutive tiles $\circled{1},\circled{2},\circled{3}$ around a $\circ$-vertex. We have the $\bullet$-vertices of these three tiles, and indicate the non-labeled degree $4$ vertices of $\circled{2}$ by small $\bullet$. We also indicate four quadrilateral tiles around each small $\bullet$ vertex of degree $4$.  

The tile $\circled{2}$ is $Q$ or $Q'$. If $\circled{2}$ is the non-degenerate $Q$, then there is no identification among four edges $x,y,u,v$ (although $\bullet$-vertices of $\circled{1},\circled{2},\circled{3}$ may be identified). If $\circled{2}$ is the degenerate $Q'$, we need to argue that we are in the situation described by the second and third of Figure \ref{4div_tile}. Since the identification of the two small $\bullet$ is twisted, and the identified vertex has degree $4$, we must have $\circled{2}$ identified with $\circled{2'}$, such that $x$ and $u$ are identified. This also implies that $y$ is identified with $v$, and $\circled{3}$ identified with $\circled{3'}$. Then the edge $u+v$ is identified with $x+y$, and we get the situation described by the second and third of Figure \ref{4div_tile}.

\begin{figure}[h]
\centering
\begin{tikzpicture}[>=latex,scale=1]

\draw	
	(0,-0.8) -- (1.6,-0.8)
	(-0.8,0) -- (1.6,0) 
	(-0.8,0.8) -- (1.6,0.8) 
	(-0.8,1.6) -- (0.8,1.6) 
	(-0.8,0) -- (-0.8,1.6)
	(0,-0.8) -- (0,1.6)
	(0.8,-0.8) -- (0.8,1.6)
	(1.6,-0.8) -- (1.6,0.8);
	
\fill
	(0.8,-0.8) circle (0.1)
	(0.8,0.8) circle (0.1)
	(-0.8,0.8) circle (0.1);

\filldraw[fill=white]
	(0,0) circle (0.1)
	(1.6,0) circle (0.1)
	(0,1.6) circle (0.1);

\fill
	(0.8,0) circle (0.04)
	(0,0.8) circle (0.04);
	
\node[draw, shape=circle, inner sep=0.5] at (0.4,-0.4) {\small $1$};
\node[draw, shape=circle, inner sep=0.5] at (0.4,0.4) {\small $2$};
\node[draw, shape=circle, inner sep=0.5] at (-0.4,0.4) {\small $3$};

\node[draw, shape=circle, inner sep=0] at (1.3,-0.4) {\small $2'$};
\node[draw, shape=circle, inner sep=0] at (1.3,0.4) {\small $3'$};

\node at (0.92,-0.4) {\scriptsize $u$};
\node at (0.92,0.4) {\scriptsize $v$};
\node at (0.4,0.92) {\scriptsize $x$};
\node at (-0.4,0.92) {\scriptsize $y$};

\end{tikzpicture}
\caption{Construction of quadrilateral subdivision.}
\label{proof2}
\end{figure}
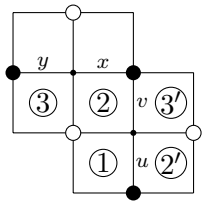

The union of the quadrilateral tiles around the initial $\circ$-vertex is a polygon. The polygon may be degenerate, but the discussion above shows that it cannot have the forbidden degeneracy in Figure \ref{degenerate}. Moreover, by comparing with the nearby $\circ$-vertex shared by $\circled{2'}$ and $\circled{3'}$, we find the edge of the polygon around initial $\circ$-vertex is locally shared with a polygon around another $\circ$-vertex (which might be the initial polygon by edge identification). We find that all the polygons form a tiling, and the quadrilateral tiling is the quadrilateral subdivision of this tiling.
\end{proof}

The following gives a criterion for double pentagonal subdivision.

\begin{theorem}\label{dps_thm2}
The quadrilateral subdivision of a tiling is (pentagonally) subdivisible if and only if the surface is orientable.
\end{theorem}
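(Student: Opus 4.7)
The plan is to reduce the statement directly to Proposition \ref{quad_theroem3} by showing that the underlying graph of any quadrilateral subdivision $T(4)$ is bipartite, regardless of what the original tiling $T$ looks like.

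First I would identify the bipartition. The vertices of $T(4)$ fall into three natural classes: the $\bullet$-vertices of $T$, the $\circ$-vertices placed at the centers of tiles of $T$, and the midpoint vertices of degree $4$ placed on each edge of $T$. By the construction in Figure \ref{4div}, every edge of $T(4)$ is either a half of an original edge of $T$, which joins a $\bullet$-vertex to a midpoint, or a newly drawn radial edge, which joins a $\circ$-vertex to a midpoint. Coloring the $\bullet$- and $\circ$-vertices black and the midpoint vertices white therefore gives a proper $2$-coloring of $T(4)$.

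Next I would check that the degenerate tile $Q'$ does not disturb this coloring. In Figure \ref{4div_tile}, the twisted identification producing $Q'$ fuses the two degree-$4$ midpoints only with each other, and pairs an original-edge half with another original-edge half; no $\bullet$- or $\circ$-vertex is identified with a midpoint, and no edge ends up with both endpoints of the same color. Hence the bipartite property survives every identification that can occur inside $T(4)$.

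With bipartiteness established, both directions of the theorem follow at once from Proposition \ref{quad_theroem3}. If the surface is orientable, the proposition says that a quadrilateral tiling is subdivisible if and only if its underlying graph is bipartite, so $T(4)$ is subdivisible. If the surface is non-orientable, the same proposition says that the underlying graph of a subdivisible quadrilateral tiling cannot be bipartite; since the graph of $T(4)$ is bipartite, $T(4)$ is not subdivisible. The only step that demands more than a one-line check is verifying that the $Q'$-identifications preserve bipartiteness, and even this is a short direct inspection of Figure \ref{4div_tile}.
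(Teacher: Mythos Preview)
Your argument is correct. You take a slightly different route from the paper: the paper invokes Proposition~\ref{quad_theroem2} directly, observing that any cycle in $T$ representing a fundamental cycle has each of its edges cut in two in $T(4)$, so the representative in $T(4)$ automatically has even length. You instead appeal to Proposition~\ref{quad_theroem3}, after showing that $T(4)$ is bipartite via the natural $2$-coloring $\{\bullet,\circ\}$ versus $\{\text{midpoints}\}$. The two arguments are very close, since Proposition~\ref{quad_theroem3} is essentially a reformulation of Proposition~\ref{quad_theroem2}; your bipartition argument is in effect the global version of the paper's edge-doubling observation. Your route has the mild advantage that it handles both directions and all degeneracies of $T$ uniformly, without needing to choose particular cycle representatives, at the cost of citing a proposition one step further downstream. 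The separate check of $Q'$ is not really needed once you note that the vertex classes $\bullet$, $\circ$, midpoint are defined globally by the construction and cannot be confused with one another, but it does no harm.
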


\begin{proof}
Given a tiling $T$ of a surface $S$, the fundamental cycles of $S$ can be represented by cycles $C$ of $T$. Then the fundamental cycles of $S$ are represented by the same cycles $C$ in the quadrilateral subdivision $T(4)$, except that every edge has been subdivided into two edges. In particular, the cycles of $T(4)$ representing fundamental cycles have even numbers of edges. By Proposition \ref{quad_theroem2}, the quadrilateral tiling $T(4)$ is subdivisible if and only if the surface $S$ is orientable.
\end{proof}

\end{document}